\newtheorem{definition}{Definition}[section]
\newtheorem{theorem}[definition]{Theorem}
\newtheorem{lemma}[definition]{Lemma}
\newtheorem{corollary}[definition]{Corollary}
\newtheorem{proposition}[definition]{Proposition}
\newtheorem{remark}[definition]{Remark}
\def\N{{\mathbb N}}
\def\Z{{\mathbb Z}}
\def\R{{\mathbb R}}
\def\C{{\mathbb C}}
\newcommand{\lt}{{L^2(\R)}}
\newcommand{\im}{{\mathrm{Im} \,}}
\newcommand\thankssymb[1]{\textsuperscript{\@fnsymbol{#1}}}
\begin{document}

\title[Multi-window STFT phase retrieval: lattice uniqueness]{Multi-window STFT phase retrieval:\\ lattice uniqueness}

\author[Philipp Grohs]{Philipp Grohs}
\address{Faculty of Mathematics, University of Vienna, Oskar-Morgenstern-Platz 1, 1090 Vienna, Austria}
\address{Research Network DataScience@UniVie, University of Vienna, Kolingasse 14-16, 1090 Vienna, Austria}
\address{Johann Radon Institute of Applied and Computational Mathematics, Austrian Academy of Sciences, Altenbergstrasse 69, 4040 Linz, Austria}
\email{philipp.grohs@univie.ac.at}

\author[Lukas Liehr]{Lukas Liehr}
\address{Faculty of Mathematics, University of Vienna, Oskar-Morgenstern-Platz 1, 1090 Vienna, Austria}
\email{lukas.liehr@univie.ac.at}

\author[Martin Rathmair]{Martin Rathmair}
\address{Institut de Math\'ematiques de Bordeaux, Universit\'e Bordeaux, UMR CNRS 5251, 351 Cours
de la Lib\'eration 33405, Talence, France}
\email{martin.rathmair@math.u-bordeaux.fr}

\subjclass[2020]{30H20, 46E22, 94A12, 94A20}
\keywords{phase retrieval, phaseless sampling, lattice-uniqueness, Fock space, Gabor analysis}

\begin{abstract}
Short-time Fourier transform (STFT) phase retrieval refers to the reconstruction of a function $f$ from its spectrogram, i.e., the magnitudes of its short-time Fourier transform $V_gf$ with window function $g$. While it is known that for appropriate windows, any function $f \in L^2(\mathbb{R})$ can be reconstructed from the full spectrogram $|V_g f(\mathbb{R}^2)|$, in practical scenarios, the reconstruction must be achieved from discrete samples, typically taken on a lattice. It turns out that the sampled problem becomes much more subtle: recent results have demonstrated that uniqueness via lattice-sampling is unachievable, irrespective of the choice of the window function or the lattice density. In the present paper, we initiate the study of multi-window STFT phase retrieval as a way to effectively bypass the discretization barriers encountered in the single-window case. By establishing a link between multi-window Gabor systems, sampling in Fock space, and phase retrieval for finite frames, we derive conditions under which square-integrable functions can be uniquely recovered from spectrogram samples on a lattice.
Specifically, we provide conditions on window functions $g_1, \dots, g_4 \in \lt$, such that every $f \in L^2(\mathbb{R})$ is determined up to a global phase from
$$
\left(|V_{g_1}f(A\mathbb{Z}^2)|, \, \dots, \, |V_{g_4}f(A\mathbb{Z}^2)| \right)
$$
whenever $A \in \mathrm{GL}_2(\mathbb{R})$ satisfies the density condition $|\det A|^{-1} \geq 4$. For real-valued functions, a density of $|\det A|^{-1} \geq 2$ is sufficient. Corresponding results for irregular sampling are also shown.
\end{abstract}

\maketitle

\section{Introduction}

The problem of recovering a function from its spectrogram, i.e., the absolute value of its short-time Fourier transform (STFT), forms a crucial step in several important application problems of current interest, which range from coherent diffraction imaging \cite{Pfeiffer2018} to quantum mechanics \cite{qm}. The resulting inverse problem is commonly known as the \emph{STFT phase retrieval problem}. The ubiquitous nature of the STFT phase retrieval problem and its relevance across numerous subjects lead to an intense investigation in recent years \cite{Alaifari2019,grohs2019stable,GrohsRathmair,grohsliehr2}. The problem is studied from a range of angles, including finite dimensional formulations \cite{Bojarovska2016,Strohmer,8866750}, group-theoretical settings \cite{fuhr2023phase,Bartusel2023,BARTUSELfuehr}, numerical analysis \cite{eldar,goetz,Iwen2022}, and frame-theoretical aspects \cite{alharbi1, daub, bandeira_savingphase, Bodmann2015,CONCA2015346,alaifariGrohs}.

Briefly speaking, the STFT phase retrieval problem asks to invert the map
$$
f \mapsto |V_gf(\Lambda)| \coloneqq \left ( |V_gf(z)| \right )_{z \in \Lambda}
$$
which sends a square-integrable function $f \in \lt$ to its spectrogram $|V_gf|$ sampled on $\Lambda \subseteq \R^2$. The map $V_gf$ denotes the short-time Fourier transform of $f$ with respect to the window-function $g \in \lt$ and is defined as
\begin{equation}\label{eq:STFT}
    V_gf(x,\omega) = \int_\R f(t)\overline{g(t-x)} e^{-2\pi i t\omega}\,dt, \quad x,\omega\in \R.
\end{equation}
Note that multiplying $f$ by a complex scalar of unit modulus does not change the corresponding spectrogram and therefore a reconstruction is only possible up to the ambiguity of a global phase factor. It is a classical result that mild assumptions on a window function $g$ imply that every $f \in \lt$ is determined (up to a global phase) by $|V_gf(\Lambda)|$, provided that $\Lambda = \R^2$ \cite[Theorem A.3]{GrohsRathmair}. 

In practical applications one only has access to samples of the spectrogram (in this case, the STFT phase retrieval problem is also termed \emph{phaseless sampling problem}). It is therefore crucial to determine to which extent uniqueness statements from $\Lambda=\R^2$ transfer to uniqueness statements from discrete sampling sets $\Lambda$, most importantly lattices, i.e. $\Lambda = A \Z^2$ for some invertible matrix $A \in \mathrm{GL}_2(\R)$. It has recently been shown in \cite{grohsLiehrJFAA,alaifari2020phase, nonuniqueness_theory} that the reconstruction from lattice samples is impossible:

\vspace{0.2cm}

\begin{quote}
\centering
    {\it there exists no window function and no lattice $\Lambda$ such that every $f \in\lt$ is determined up to a global phase by $|V_gf(\Lambda)|$.}
\end{quote}

\vspace{0.2cm}

Hence, sampling on lattices does not lead to a discretization of the STFT phase retrieval problem, no matter how dense the lattice is chosen.
This is in stark contrast to the setting where phase information is present, as classical results in time-frequency analysis demonstrate stable recovery of any $f \in \lt$ from $V_gf(A\Z^2)$, provided that $A\Z^2$ obeys a suitable density condition, and $g$ is suitably chosen, see \cite{Groechenig}.

Since applications demand the ability to reconstruct from samples, the question of how to adapt the sampling acquisition, so as to bypass the previously mentioned non-uniqueness property, is of importance.

In the present article, we propose an increase of the sampling redundancy beyond samples arising from a single window function.
Specifically, we propose a multi-window approach to the phaseless sampling problem in order to overcome the discretization barriers encountered in the single-window case. Multi-window Gabor systems appeared in the time-frequency analysis literature, for instance, in \cite{balan_multiwindow, groechenig_sharpresults,ZIBULSKI1997188}.
Our main result can be stated in a concise way as follows:

\vspace{0.1cm}

\begin{quote}
    \centering
    {\it there exist window functions $g_1,g_2,g_3,g_4$ such that for every $A\in \mathrm{GL}_2(\R)$ with  $|\det(A)|^{-1} \geq 4$, every $f \in \lt$ is determined up to a global phase by
$$
\left(
|V_{g_1}f(A\Z^2)|, \ldots, |V_{g_4}f(A\Z^2)| \right).
$$ }
\end{quote}

\vspace{0.1cm}

The window functions $g_1,g_2,g_3,g_4$ in the latter statement are of a rather simple form as they can be chosen as linear combinations of 
a Gaussian and the first Hermite function. Our results therefore show that spectrogram samples with respect to four window functions contain enough information to recover any square-integrable, complex-valued function. By contrast, previous results only showed that uniqueness via sampling on lattices is possible under severe restrictions of the function class to subspaces of $\lt$ \cite{grohsliehr1,grohs2022completeness,wellershoff2022injectivity}.

The theorems derived in the present exposition are designed in such a way that they give an easy-to-check condition on $g_1,\dots,g_4$ such that unique recovery via phaseless sampling is guaranteed. In addition, the statements are flexible to produce phaseless sampling results from non-uniform sampling sets, different from lattices (see Theorem \ref{theorem:lattice_result} in Section \ref{section:spectrogram_sampling}). 

Recall that there exists a one-to-one correspondence between the STFT with Gaussian window and functions in the Bargmann Fock space. Despite the fact that Fock spaces contain a rich theory on uniqueness, sampling and interpolation, this theory did not play any significant role so far in the study of the STFT phase retrieval problem. The proof techniques presented in the present exposition demonstrate, that the multi-window approach opens the door to apply sampling results in Fock spaces to the problem of phaseless sampling of the STFT, highlighting an underlying connection between the two subjects and representing a technique in passing from classical STFT sampling to phaseless STFT sampling. Importantly, the solution to a finite-dimensional phase retrieval problem serves as the essential link between ordinary sampling and phaseless sampling of the STFT.

\subsection{Contributions}\label{section:contribution}

We consider elements $f,h$ of a vector space over the complex field $\C$ equivalent, and write $f\sim h$, if there exists a constant $\tau \in \mathbb{T} \coloneqq \{ z \in \C : |z|=1\}$ such that $f=\tau h$. If $f \sim h$ then we also say that $f$ and $h$ agree up to a global phase. The next definition settles the notion of multi-window STFT phase retrieval.

\begin{definition}
Let $I$ be an index set, let $\{g_p\}_{p \in I}\subseteq L^2(\mathbb{R})$ be a family of window functions, 
let $\Lambda\subseteq \R^2$ be a set of sampling points, and let $\mathcal{C}\subseteq L^2(\mathbb{R})$ be a function class.
We say that $\left(\{g_p\}_{p\in I}, \Lambda \right)$ \emph{does phase retrieval} on $\mathcal{C}$ if it holds that 
$$
\left(f,h\in \mathcal{C}:~|V_{g_p} f(\Lambda)| = |V_{g_p} h(\Lambda)|,~ \forall p \in I\right) \quad \implies \quad f\sim h.
$$
\end{definition}
With a slight abuse of notation, we will say that a family of vectors $\mathcal{V}\subseteq \C^n$ does phase retrieval if it holds that 
$$
\left( z,w\in\C^n:~ |\langle z,\varphi\rangle| = |\langle w,\varphi \rangle|, ~\forall \varphi\in\mathcal{V}\right)
\quad
\implies
\quad
z\sim w.
$$
Having settled the above terminology, we can now turn to discuss the main results of the article. To that end, recall that the first two Hermite functions $h_0,h_1 \in \lt$ are given by
$$
h_0(t)=2^{1/4}e^{-\pi t^2}, \ \ \ h_1(t)=2^{5/4}\pi t e^{-\pi t^2}.
$$
We will employ window functions which arise as linear combinations of $h_0$ and $h_1$, and define 
\begin{equation}\label{def:gp}
    g_p:= \lambda h_0 + \mu h_1, \quad p=(\lambda,\mu)\in \C^2.
\end{equation}
We call $p \in \C^2$ the defining vector of $g_p$. Now consider a subset $\mathcal{P} \subseteq \C^2$ and regard $\mathcal{P}$ as an index set. This results in a family of window functions $\{ g_p \}_{p \in \mathcal{P}} \subseteq \lt$, parameterized by $\mathcal{P}$.

We are prepared to formulate statements that reveal a connection between phase retrieval in $\C^2$ and multi-window STFT phase retrieval. The first statement addresses the situation when the function space $\mathcal{C}$ is the entire space $\lt$.

\begin{theorem}\label{thm:mainlatticeversion}
Suppose that $\mathcal{P}\subseteq \C^2$ does phase retrieval.
If $A\in \mathrm{GL}_2(\R)$ satisfies $|\det(A)|^{-1} \ge 4$ then $\left(\{g_p\}_{p\in\mathcal{P}}, A\mathbb{Z}^2\right)$ does phase retrieval on $L^2(\R)$.
\end{theorem}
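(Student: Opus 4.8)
The plan is to transport the whole problem into the Bargmann--Fock space and reduce the phaseless sampling hypothesis to a \emph{pointwise} instance of phase retrieval in $\C^2$, after which a sampling/uniqueness theorem in Fock space is used to promote the pointwise information to a global phase. Writing $F=\mathcal{B}f$ and $H=\mathcal{B}h$ for the Bargmann transforms, which are entire functions in the Fock space $\mathcal{F}^2$ with weight $e^{-\pi|z|^2}$, I would first record the structural description of Hermite-window STFTs. Since $V_{g_p}f$ is conjugate-linear in the window, $g_p=\lambda h_0+\mu h_1$, and passing from $h_0$ to $h_1$ corresponds on the Fock side to differentiation, one obtains an identity of the shape
$$
|V_{g_p}f(x,\omega)|=e^{-\pi|z|^2/2}\,\bigl|\langle v_F(z),p\rangle\bigr|,\qquad z=x+i\omega,
$$
where $v_F(z)=\bigl(F(z),\,aF'(z)+b\bar z\,F(z)\bigr)\in\C^2$ for universal constants $a,b$, and $\langle\cdot,\cdot\rangle$ is the Hermitian pairing underlying the definition of phase retrieval on $\C^2$; the same holds for $h$ with $v_H$.

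With this identity in place, the hypothesis $|V_{g_p}f(A\Z^2)|=|V_{g_p}h(A\Z^2)|$ becomes, after cancelling the nonvanishing Gaussian factor and identifying $A\Z^2$ with a lattice $\Lambda\subseteq\C$, the statement that $|\langle v_F(z_0),p\rangle|=|\langle v_H(z_0),p\rangle|$ for every $z_0\in\Lambda$ and every $p\in\mathcal{P}$. Fixing $z_0$ and using that $\mathcal{P}$ does phase retrieval on $\C^2$, I conclude $v_F(z_0)\sim v_H(z_0)$, i.e. there is $\tau(z_0)\in\T$ with $F(z_0)=\tau(z_0)H(z_0)$ and, after the inessential $\bar z_0F$ term cancels, $F'(z_0)=\tau(z_0)H'(z_0)$. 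The degenerate case $H\equiv0$ is handled separately: then $\langle v_F(z_0),p\rangle=0$ for all $p$, and since a phase-retrieval family in $\C^2$ must span $\C^2$, we get $v_F(z_0)=0$, so $F$ vanishes on $\Lambda$ and hence $F\equiv0$ because $\Lambda$ is a uniqueness set for $\mathcal{F}^2$.

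The crux is to upgrade the a priori $z_0$-dependent phases $\tau(z_0)$ to a single global phase. The clean device is the Wronskian
$$
W:=F'H-FH',
$$
which is entire and, by the two jet relations, vanishes at every $z_0\in\Lambda$ (including at common zeros of $F$ and $H$). Using the pointwise Fock bound $|F(z)|\lesssim e^{\pi|z|^2/2}$ together with the Cauchy estimate $|F'(z)|\lesssim(1+|z|)e^{\pi|z|^2/2}$ gives $|W(z)|\lesssim(1+|z|)e^{\pi|z|^2}$, so $W$ lies in a Fock-type space whose Gaussian weight is enlarged relative to $\mathcal{F}^2$. I would then invoke a Seip--Wallst\'en-type density theorem to assert that the density condition on $A$ makes $\Lambda$ a uniqueness set for this space, forcing $W\equiv0$. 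Consequently $(F/H)'\equiv0$, so $F=cH$ for a constant $c$; evaluating at any lattice point with $H(z_0)\neq0$ gives $c=\tau(z_0)\in\T$, whence $F\sim H$ and therefore $f\sim h$.

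The main obstacle is precisely this quantitative sampling-density step: certifying that $|\det(A)|^{-1}\ge4$ makes $\Lambda$ a uniqueness set for the space in which the auxiliary entire function lives, which is where the Fock-space sampling theory enters essentially and where the sharp constant is decided. I expect the improvement to $|\det(A)|^{-1}\ge2$ for real-valued $f$ to originate from the symmetry $\overline{F(\bar z)}=F(z)$ of Bargmann transforms of real signals: it forces $W$ to have real Taylor coefficients, so that $W$ vanishes on the reflected lattice $\overline{\Lambda}$ as well, doubling the available zero set and thereby halving the density required to conclude $W\equiv0$. Matching the precise thresholds stated in the theorem, and choosing the auxiliary entire function whose growth order is calibrated to them, is the delicate quantitative heart of the argument.
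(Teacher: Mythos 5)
Your proposal is correct and shares the overall architecture of the paper's proof: reduce the hypothesis, via $V_{g_p}f=\overline{\lambda}V_{h_0}f+\overline{\mu}V_{h_1}f$ and the Bargmann identities for $V_{h_0}$ and $V_{h_1}$, to a pointwise phase retrieval problem in $\C^2$ at each lattice point, and then kill an auxiliary entire function using Perelomov's lattice uniqueness criterion for Fock spaces. The genuine difference lies in the choice of auxiliary function, and it is worth dwelling on. The paper retains from the pointwise equivalence $(F(z),F'(z))\sim(H(z),H'(z))$ only the two scalar identities $|F|^2=|H|^2$ and $\overline{F}F'=\overline{H}H'$ on $\overline{\Lambda}$, from which one can only conclude that the product $G=FH(FH'-F'H)$ vanishes there; since $G\in\mathcal{F}^2_{4\pi}(\C)$, this costs density $4$. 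You instead use the full strength of the equivalence, namely that a \emph{single} unimodular $\tau(z_0)$ multiplies both components, which forces the Wronskian $W=F'H-FH'$ itself to vanish at every lattice point, including common zeros of $F$ and $H$. Since in fact $W\in\mathcal{F}^2_{2\pi}(\C)$ --- this is precisely what the paper proves in Lemma \ref{lem:admfock}; your Cauchy-estimate bound $|W(z)|\lesssim(1+|z|)e^{\pi|z|^2}$ only places $W$ in $\mathcal{F}^2_{2\pi+\eps}(\C)$ for every $\eps>0$, which still suffices at density $4$ but loses the endpoint --- Perelomov's theorem applies already at density $2$. So your route not only proves the stated theorem but, carried out with the sharp membership $W\in\mathcal{F}^2_{2\pi}(\C)$ in place of the crude growth bound, improves the density threshold for complex-valued signals from $4$ to $2$. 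Two small points to tidy: the Bargmann identities involve the reflection $(x,\omega)\mapsto x-i\omega$, so the relevant uniqueness set is $\overline{A\Z^2}$ rather than $A\Z^2$ (harmless, since conjugation preserves the density); and in the last step one should note that $H$ cannot vanish on all of $\overline{\Lambda}$ unless $H\equiv 0$ (the case you treated separately), so a lattice point with $H(z_0)\neq 0$ exists and yields $|c|=1$. Your guess about the source of the factor-$2$ gain for real signals matches the paper's mechanism exactly (the symmetry $\overline{F(z)}=F(\overline{z})$ doubles the zero set of the auxiliary function via $\Lambda=\Gamma\cup\overline{\Gamma}$).
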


In the restricted setting where the function space is assumed to consist only of real-valued functions ($\mathcal{C} = L^2(\R,\R)$), the density of the sampling points can be reduced by half compared to the complex regime. Notice that just like in the case of complex-valued functions, the STFT phase retrieval problem lacks uniqueness from lattice samples with respect to a single window function, even when the function class is restricted to real-valued functions \cite[Section 3.3]{grohsLiehrJFAA}. The density reduction in the real-valued regime reads as follows.

\begin{theorem}\label{thm:latticeuniqueness_real}
Suppose that $\mathcal{P}\subseteq \C^2$ does phase retrieval.
If $A = \mathrm{diag}(\alpha,\beta) \in \mathrm{GL}_2(\R)$  satisfies  $|\det(A)|^{-1} \ge 2$ then $\left(\{g_p\}_{p\in\mathcal{P}}, (0,\frac{\beta}{4})^T + A\mathbb{Z}^2\right)$ does phase retrieval on $L^2(\R,\R)$.
\end{theorem}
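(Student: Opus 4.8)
The plan is to localize the problem at each sampling point and then globalize it through analyticity in the Bargmann--Fock picture. Fix $f,h\in L^2(\R,\R)$ with $|V_{g_p}f(\Gamma)|=|V_{g_p}h(\Gamma)|$ for all $p\in\mathcal P$, where $\Gamma:=(0,\tfrac{\beta}{4})^T+A\Z^2$. For a fixed point $z=(x,\omega)$ set $u_f(z):=(V_{h_0}f(z),V_{h_1}f(z))\in\C^2$ and likewise $u_h(z)$. Since $g_p=\lambda h_0+\mu h_1$ and $V_g f$ is conjugate-linear in $g$, one has $V_{g_p}f(z)=\langle u_f(z),p\rangle$, so the hypothesis reads $|\langle u_f(z),p\rangle|=|\langle u_h(z),p\rangle|$ for every $p\in\mathcal P$. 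First I would invoke the standing assumption that $\mathcal P$ does phase retrieval on $\C^2$: it yields, for each $z\in\Gamma$, a unimodular $\tau(z)$ with $u_f(z)=\tau(z)\,u_h(z)$. Equivalently, the $2\times2$ determinant $V_{h_0}f(z)\,V_{h_1}h(z)-V_{h_1}f(z)\,V_{h_0}h(z)$ vanishes at every $z\in\Gamma$. This is the step that converts phaseless, multi-window information into an honest phase-insensitive bilinear identity.

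Next I would pass to Fock space. Writing $F:=\mathcal Bf$ and $G:=\mathcal Bh$ for the Bargmann transforms and using the elementary identities $V_{h_0}f=\rho\,F$ and $V_{h_1}f=\partial_x V_{h_0}f$ (the latter because $h_1=2\pi t\,h_0$), where $\rho$ is an explicit nonvanishing Gaussian--phase factor common to $f$ and $h$, a direct computation collapses the determinant to
\[
V_{h_0}f\,V_{h_1}h-V_{h_1}f\,V_{h_0}h=c_0\,\rho^2\,W,\qquad W:=F'G-FG',
\]
with $c_0\neq0$ constant; hence the Wronskian $W$ of the entire functions $F,G$ vanishes on (the affine image of) $\Gamma$. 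Here I would record two structural facts: (i) $F,G$ lie in the Fock space of growth $e^{\pi|z|^2/2}$, so $W$ is entire of order two and finite type, a member of a larger Fock-type space whose lattice uniqueness sets are governed by a sharp density threshold of Seip--Wallstén type; and (ii) because $f,h$ are real-valued, $F(\bar z)=\overline{F(z)}$ and $G(\bar z)=\overline{G(z)}$, whence $W(\bar z)=\overline{W(z)}$. Fact (ii) is precisely where the frequency shift by $\tfrac{\beta}{4}$ earns its keep: it places $\Gamma$ off the real axis and forces $\Gamma$ and its mirror $\overline{\Gamma}$ to be disjoint and to interleave into a lattice of twice the density, so that $W$ in fact vanishes on a set of density $2|\det A|^{-1}$.

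The hard part will be the density count. I would need the exact critical density of the Fock-type space containing $W$ and must then verify that, under $|\det A|^{-1}\ge2$, the reflected sample set $\Gamma\cup\overline{\Gamma}$ exceeds it, forcing $W\equiv0$; this is where the constant $2$ is pinned down (as opposed to $4$ in the complex regime, where the conjugate symmetry, and hence the factor-two gain, is unavailable), and where the borderline type of $W$ together with the possible common zeros of $F$ and $G$ must be handled with care. Once $W\equiv0$, I would conclude $(F/G)'=0$ wherever $G\neq0$, so $F=cG$ for a constant $c$ (the case $h=0$ being trivial), i.e.\ $f=ch$ by injectivity of $\mathcal B$. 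Finally, evaluating the original magnitude identity at a single point $z\in\Gamma$ with $u_h(z)\neq0$ and choosing $p\in\mathcal P$ with $\langle u_h(z),p\rangle\neq0$ forces $|c|=1$, so $f\sim h$ and $\left(\{g_p\}_{p\in\mathcal P},\Gamma\right)$ does phase retrieval on $L^2(\R,\R)$, as claimed.
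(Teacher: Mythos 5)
Your overall architecture coincides with the paper's: localize at each sample using the hypothesis that $\mathcal{P}$ does phase retrieval in $\C^2$ to get $\bigl(V_{h_0}f(z),V_{h_1}f(z)\bigr)\sim\bigl(V_{h_0}h(z),V_{h_1}h(z)\bigr)$, transport this to the Bargmann side, produce an entire function vanishing on the sample set, double that set to $\Gamma\cup\overline{\Gamma}$ via the conjugate symmetry of Bargmann transforms of real signals (this is Lemma \ref{lemma:bargmann_real} and Lemma \ref{lemma:reflection}, and your observation that the $\beta/4$ shift makes $\Gamma\cup\overline{\Gamma}$ a shifted lattice of density $2|\det A|^{-1}\ge 4$ is exactly Lemma \ref{lemma:decomposition_of_lattices}), and finish with a Perelomov-type density theorem. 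The one genuine divergence is the choice of vanishing function. The paper extracts from the pointwise equivalence only the phase-invariant data $|F|^2=|H|^2$ and $\overline{F}F'=\overline{H}H'$ on the sample set and combines them into $\overline{FH}(FH'-F'H)=0$, so the entire function it kills is $G=FH(FH'-F'H)\in\mathcal{F}^2_{4\pi}(\C)$ (Lemma \ref{lem:admfock}), whence the density-$4$ requirement on $\Gamma\cup\overline{\Gamma}$. You instead note that pointwise proportionality of $(F,F')$ and $(H,H')$ annihilates the $2\times 2$ determinant itself, so the Wronskian $W=F'H-FH'$ already vanishes on the sample set without multiplication by $FH$. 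That deduction is correct and strictly stronger, and it buys a smaller Fock space: the $\pi\bar z$-cancellation carried out inside the proof of Lemma \ref{lem:admfock} shows $W(z)=\eta(z)^2\bigl[V_{h_1}f(\bar z)V_{h_0}h(\bar z)-V_{h_0}f(\bar z)V_{h_1}h(\bar z)\bigr]\in\mathcal{F}^2_{2\pi}(\C)$, whose lattice uniqueness threshold is $2$ by Theorem \ref{thm:perelomovuniq} --- comfortably below the density $4$ of $\Gamma\cup\overline{\Gamma}$. So your route closes, and would in fact tolerate a weaker density hypothesis than the stated one.

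The step you have actually left open is precisely this membership statement, which you yourself flag as ``the hard part'': you never commit to which Fock-type space contains $W$ or what its critical density is, and a naive Cauchy estimate on $F'$ only places $F'H$ in $\mathcal{F}^2_{2\pi+\varepsilon}(\C)$. The missing computation is the content of Lemma \ref{lem:admfock}; once imported, the density count is immediate (and even the crude bound $W\in\mathcal{F}^2_{4\pi}(\C)$ would do, provided you then use Perelomov's lattice theorem, which allows equality in the density condition and is translation-invariant via the shifts $T_a$, rather than the strict-inequality sampling theorem). The remaining loose ends are routine but should be written out: the case $h=0$ is not quite ``trivial'' --- you need that $u_f\equiv 0$ on $\Gamma$ forces $F$ to vanish on $\Gamma\cup\overline{\Gamma}$, a uniqueness set for $\mathcal{F}_\pi^2(\C)$, hence $f=0$; and the normalization $|c|=1$ needs a point of $\Gamma$ with $u_h\neq 0$, which exists because otherwise $H$ would vanish on $\Gamma\cup\overline{\Gamma}$ and hence identically. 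None of these is an obstruction.
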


The two theorems above include as an assumption that $\mathcal{P}$ forms a family which does phase retrieval in $\C^2$.
It is a notoriously hard problem to decide whether a given frame does phase retrieval in $\C^n$ or not \cite{bandeira_savingphase,Heinosaari2013,Edidin,Vinzant,Bodmann2015}. 
With regards to the problem of phase retrieval in $\C^2$, it is known that the so-called \emph{$4n-4$ conjecture} holds, that is, in dimension $n=2$ no family of three or less vectors does phase retrieval, while a generic family of four (or more) vectors does the job \cite[Theorem 10]{bandeira_savingphase}.
Recall that $m$ vectors $\varphi_1, \dots, \varphi_m \in \C^n$ are called generic if they belong to a certain non-empty Zariski open subset of $\C^{n \times m} \simeq (\R^{n \times m})^2$. 
In this article, we will  abolish the assumption on a frame to be generic and establish a precise characterization for a system of four vectors to do phase retrieval in $\C^2$. This characterization provides us straightaway with an easy-to-check geometric condition on how to choose four window functions for the STFT phase retrieval problem.

\begin{theorem}\label{thm:prinC2}
Let $\varphi_0,\varphi_1,\varphi_2,\varphi_3\in \C^2$. 
Moreover, define
\begin{equation*}
    \lambda_k: = \langle \varphi_k, \varphi_0\rangle ,
    \quad 
    \mu_k:= \langle\varphi_k, \bigl( \begin{smallmatrix}0 & -1\\ 1 & 0\end{smallmatrix}\bigr) \overline{\varphi_0} \rangle, 
    \quad k\in\{1,2,3\}.
\end{equation*}
The following statements are equivalent:
\begin{enumerate}[i)]
\item $\{\varphi_0, \varphi_1,\varphi_2,\varphi_3\}\subseteq \C^2$ does phase retrieval.
\item
It holds that $\mu_k\neq 0$, $k\in\{1,2,3\}$, and that the numbers 
$
\lambda_1/\mu_1, \lambda_2/\mu_2, \lambda_3/\mu_3
$
are not collinear.
\end{enumerate}
\end{theorem}

By combining Theorem \ref{thm:mainlatticeversion}, Theorem \ref{thm:latticeuniqueness_real}, and Theorem \ref{thm:prinC2}, we arrive at the following explicit result.

\begin{corollary}\label{cor:collinear_main}
Suppose that $\lambda_1,\lambda_2,\lambda_3\in\C$ are not collinear and define 
$$
\mathcal{P}:=\left\{
\begin{pmatrix}
1\\
0
\end{pmatrix},
\begin{pmatrix}
\lambda_1\\
1
\end{pmatrix},
\begin{pmatrix}
\lambda_2\\
1
\end{pmatrix},\begin{pmatrix}
\lambda_3\\
1
\end{pmatrix}
\right\}.
$$
If $A\in \mathrm{GL}_2(\R)$ satisfies $|\det(A)|^{-1}\ge 4$, then 
$(\{g_p\}_{p\in\mathcal{P}}, A\mathbb{Z}^2)$ does phase retrieval on $L^2(\R)$. Moreover, if $A = \mathrm{diag}(\alpha,\beta) \in \mathrm{GL}_2(\R)$  satisfies  $|\det(A)|^{-1} \ge 2$, then $\left(\{g_p\}_{p\in\mathcal{P}}, (0,\frac{\beta}{4})^T + A\mathbb{Z}^2\right)$ does phase retrieval on $L^2(\R,\R)$.
\end{corollary}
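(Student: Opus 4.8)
The plan is to recognize Corollary \ref{cor:collinear_main} as a direct specialization of the machinery already assembled: both conclusions will follow verbatim from Theorem \ref{thm:mainlatticeversion} (the complex case) and Theorem \ref{thm:latticeuniqueness_real} (the real case) once the single remaining hypothesis of those theorems---that $\mathcal{P}\subseteq\C^2$ does phase retrieval---has been verified. Thus the entire task reduces to checking that the four explicit vectors constituting $\mathcal{P}$ do phase retrieval in $\C^2$, and for this I would invoke the characterization of Theorem \ref{thm:prinC2}.

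To apply Theorem \ref{thm:prinC2} I would set $\varphi_0 := (1,0)^T$ and $\varphi_k := (\lambda_k,1)^T$ for $k\in\{1,2,3\}$, and then compute the auxiliary quantities appearing in that theorem. Using the convention that $\langle\cdot,\cdot\rangle$ is linear in its first slot, a short computation gives $\langle\varphi_k,\varphi_0\rangle=\lambda_k$, so the auxiliary numbers named $\lambda_k$ in Theorem \ref{thm:prinC2} coincide with the free parameters of the corollary. For the second family of quantities, since $\varphi_0$ is real one has $\bigl(\begin{smallmatrix}0 & -1\\ 1 & 0\end{smallmatrix}\bigr)\overline{\varphi_0}=(0,1)^T$, whence $\mu_k=\langle\varphi_k,(0,1)^T\rangle=1$ for every $k$.

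With these values in hand, condition (ii) of Theorem \ref{thm:prinC2} becomes transparent: the requirement $\mu_k\neq 0$ holds trivially, and the ratios simplify to $\lambda_k/\mu_k=\lambda_k$, so the demanded non-collinearity of $\lambda_1/\mu_1,\lambda_2/\mu_2,\lambda_3/\mu_3$ is exactly the standing hypothesis of the corollary that $\lambda_1,\lambda_2,\lambda_3$ are not collinear. Hence (ii) holds, Theorem \ref{thm:prinC2} yields that $\mathcal{P}$ does phase retrieval in $\C^2$, and the two assertions of the corollary follow by feeding this into Theorems \ref{thm:mainlatticeversion} and \ref{thm:latticeuniqueness_real}, respectively. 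I do not anticipate any genuine obstacle here, as the argument is purely a matter of specialization; the only point requiring care is the bookkeeping, namely pinning down the sesquilinearity convention so that $\langle\varphi_k,\varphi_0\rangle$ reproduces $\lambda_k$ rather than $\overline{\lambda_k}$, and correctly evaluating the rotation $\bigl(\begin{smallmatrix}0 & -1\\ 1 & 0\end{smallmatrix}\bigr)\overline{\varphi_0}$.
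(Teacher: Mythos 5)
Your proposal is correct and follows exactly the route the paper intends: verify via Theorem \ref{thm:prinC2} (with $\varphi_0=(1,0)^T$, $\varphi_k=(\lambda_k,1)^T$, so $\mu_k=1$ and $\lambda_k/\mu_k=\lambda_k$) that $\mathcal{P}$ does phase retrieval in $\C^2$, then feed this into Theorems \ref{thm:mainlatticeversion} and \ref{thm:latticeuniqueness_real}. The only cosmetic remark is that Proposition \ref{prop:prinC2particular} already covers this particular configuration directly, so one could cite it in place of the full characterization; the bookkeeping of the sesquilinearity convention in your computation is consistent with the paper's convention $\langle v,w\rangle=\sum_j v_j\overline{w_j}$.
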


\begin{remark}
    We are primarily concerned with function spaces $\mathcal{C}$ that are subspaces of $\lt$. However, we mention the following generalization of Theorem \ref{thm:mainlatticeversion} (generalizations of the other statements can be obtain analogously): for a given Schwartz function $g \in \mathcal{S}(\R)$ and a tempered distribution $f \in \mathcal{S}'(\R)$, we define the STFT of $f$ with respect to $g$ by
$$
V_gf(x,\omega) = \langle f, \pi(x,\omega)g \rangle_{\mathcal{S}'(\R) \times \mathcal{S}(\R)},
$$
where $\pi(x,\omega)g(t) =  e^{2\pi i \omega t}g(t - x)$. Moreover, we define for a fixed non-zero Schwartz function $g$ the space of distributions with bounded STFTs via
    $$
    M^\infty \coloneqq \left \{ f \in \mathcal{S}'(\R) : \| V_gf \|_{L^\infty(\R^2)} < \infty \right \}.
    $$
    In fact, the latter definition is independent of the particular choice of the non-zero Schwartz function $g$.
    If one replaces in Theorem \ref{theorem:lattice_result} the density condition $|\det(A)|^{-1} \geq 4$ by a strict inequality, then one arrives at a phaseless sampling result in $M^\infty$: if $\mathcal{P} \subseteq \C^2$ does phase retrieval and if $A \in \mathrm{GL}_2(\R)$ satisfies $|\det(A)|^{-1} > 4$, then $\left(\{g_p\}_{p\in\mathcal{P}}, A\mathbb{Z}^2\right)$ does phase retrieval on $M^\infty$. That is, for every $f,h \in M^\infty$ one has the implication
    $$
    |V_{g_p} f(\Lambda)| = |V_{g_p} h(\Lambda)|,~ \forall p \in \mathcal{P} \quad \implies \quad f\sim h.
    $$
\end{remark}

\begin{figure}
\centering
\hspace*{-1.6cm}
  \includegraphics[width=15.5cm]{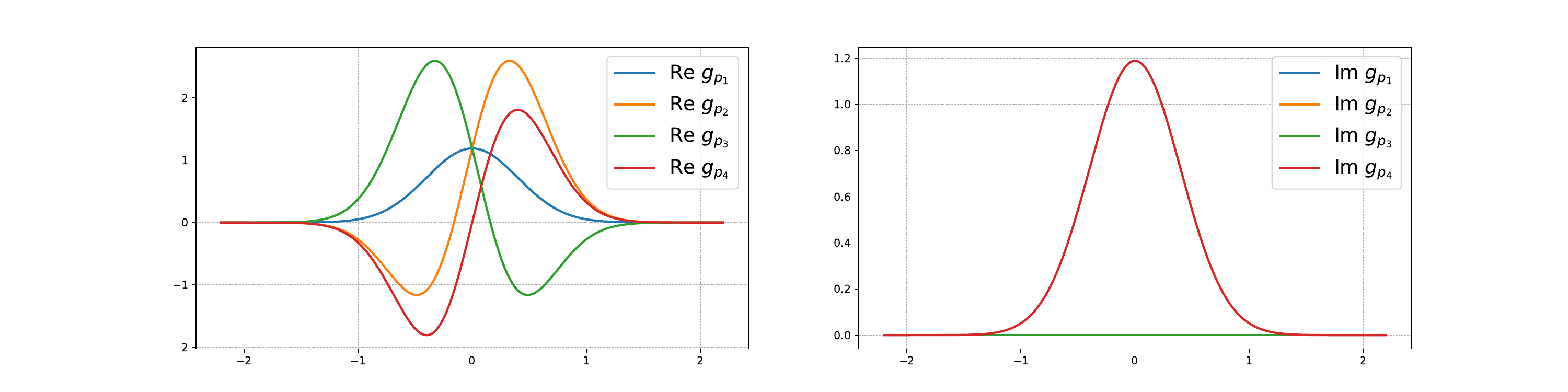}
\caption{{Plots of the real and imaginary part of four window functions $g_{p_1}, \dots, g_{p_4}$ subjected to the point configuration $\mathcal{P} = \{ p_1, \dots, p_4 \} \subseteq \C^2$ given by $\mathcal{P}=\left \{ \begin{pmatrix} 1\\0 \end{pmatrix}, \begin{pmatrix} 1\\1 \end{pmatrix}, \begin{pmatrix} -1\\1 \end{pmatrix}, \begin{pmatrix} i\\1 \end{pmatrix} \right \}.$
According to Corollary \ref{cor:collinear_main}, $\left(\{g_p\}_{p\in\mathcal{P}}, A\mathbb{Z}^2\right)$ does phase retrieval on $L^2(\R)$, provided that $|\det(A)|^{-1} \ge 4$.}
}
\label{figure:plot_windows}
\end{figure}

\subsection{Terminology and notation}\label{sec:term_not}

Throughout the exposition, we will casually identify $\R^2$ with the complex plane $\C$ by virtue of the map $(x,y)^T \mapsto x+iy$. For a subset $S \subseteq \C$ we denote by $\overline{S}$ the set consisting of the conjugate elements of $S$, i.e., $\overline{S} \coloneqq \{ s \in \C : \overline{s} \in S \}$. Similarly, if $S \subseteq \R^2$ then $\overline{S}$ stands for the set $\overline{S} \coloneqq \{ (x,y)^T \in \R^2 : (x,-y)^T \in S \}$.

For $\Omega$ a set and $\mathcal{A}\subseteq \C^\Omega$ a class of functions, we say that $Z \subseteq \Omega$ is a set of uniqueness for $\mathcal{A}$ if 
$$
\left(f,g \in \mathcal{A}: f(z) = g(z), \ \forall z \in Z \right) \quad \implies \quad f=g.
$$

A subset $\Lambda \subseteq \R^2 \simeq \C$ is called a shifted lattice if there exists a vector $v \in \R^2$ and an invertible matrix $A \in \mathrm{GL}_2(\R)$ such that
$$
\Lambda = v + A\Z^2 = \{ v+Ax : x \in \Z^2 \}.
$$
The matrix $A$ is called the generating matrix of $\Lambda$. A shifted lattice $\Lambda$ is called separable if it is generated by a diagonal matrix. For a matrix $A \in \C^{2 \times 2}$ we denote by $A^*$ the conjugate transpose of $A$, i.e. $A^* = (\overline{A})^T$.

As usual, we denote by $\lt$ the Lebesgue space of all measurable, complex-valued and square-integrable functions $f : \R \to \C$. The subspace of $\lt$ which consists of all real-valued functions in $\lt$ is denoted by $L^2(\R,\R)$.

The short-time Fourier transform $V_gf$ of a function $f \in \lt$ with respect to a window function $g \in \lt$ is defined as in equation \eqref{eq:STFT}. The map $V_gf$ is uniformly continuous and satisfies the relation $\| V_gf \|_{L^2(\R^2)} = \| f \|_{L^2(\R)} \| g \|_{L^2(\R)}$ \cite[Corollary 3.2.2]{Groechenig}. Hence, $f \mapsto V_gf$ constitutes an isometry from $L^2(\R)$ into $L^2(\R^2)$, provided that $\| g \|_{L^2(\R)} = 1$.

Finally, the vector space $\C^n$, $n \in \N$, is equipped with the scalar product $\langle v,w \rangle = \sum_{j=1}^n v_j \overline{w_j}$ where $v=(v_1,\dots,v_n)^T \in \C^n$ and $w=(w_1,\dots,w_n)^T \in \C^n$.

\subsection{Outline}

In Section \ref{section:fock_spaces}, we collect preliminary results concerning properties of Fock spaces with a focus on sets of uniqueness and stable sampling, which will be needed throughout the paper. Section \ref{section:pr_C2} is devoted to prooving of a characterization of vectors that do phase retrieval in $\C^2$. Based on the groundwork laid in Section \ref{section:fock_spaces} and Section \ref{section:pr_C2}, we proceed to derive and utilize a connection between phase retrieval in a finite frame setting, sets of uniqueness in Fock spaces, as well as phaseless sampling of the STFT. In particular we prove the phaseless sampling results stated in Section \ref{section:contribution}, including several extensions to irregular sampling.

\section{Preliminaries on Fock spaces}\label{section:fock_spaces}

This section is devoted to recalling and collecting a couple of facts about Fock spaces, that are used throughout the remainder of the article. For an exposition on Fock spaces we refer to \cite{zhu:fock}.

\subsection{Basic properties and relations to STFT}

The space $L^2_\alpha(\C)$ consists of all Lebesgue measurable functions $F : \C \to \C$  for which
$$
\| F \|_{\alpha} \coloneqq \left ( \frac{\alpha}{\pi} \int_\C |F(z)|^2 e^{-\alpha |z|^2} \, dA(z) \right )^{\frac{1}{2}} < \infty,
$$
where $dA(z)$ denotes the Euclidean area measure on $\C$. If $\mathcal{O}(\C)$ is the collection of all entire function on $\C$ then the Fock space (or Bargmann-Fock space) is defined as the intersection
$$
\mathcal{F}^2_\alpha(\C) \coloneqq L_\alpha^2(\C) \cap \mathcal{O}(\C).
$$
The pointwise estimate
\begin{equation}\label{eq:fockptwise}
|F(z)| \le e^{\frac{\alpha}{2} |z|^2} \|F\|_{\alpha}
\end{equation}
which holds for every $F \in \mathcal{F}^2_\alpha(\C)$ and every $z \in \C$, renders $\mathcal{F}^2_\alpha(\C)$ into a reproducing kernel Hilbert space (RKHS) with inner product 
$$
\langle F,G \rangle_\alpha = \frac{\alpha}{\pi} \int_\C F(z) \overline{G(z)} e^{-\alpha |z|^2} \, dA(z).
$$
In addition, the RKHS structure implies the following statement.

\begin{lemma}\label{lem:fockproduct}
Let $\alpha,\beta>0$ and suppose that $F\in \mathcal{F}^2_\alpha(\C)$ and $G\in \mathcal{F}_\beta^2(\C)$.
Then it holds that $FG\in \mathcal{F}^2_{\alpha+\beta}(\C)$.
\end{lemma}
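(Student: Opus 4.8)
The plan is to verify the two defining properties of membership in $\mathcal{F}^2_{\alpha+\beta}(\C)$ separately: holomorphy and square-integrability against the Gaussian weight with parameter $\alpha+\beta$. The first is immediate, since the product of two entire functions is entire, so $FG \in \mathcal{O}(\C)$. The entire content of the lemma therefore lies in establishing that $FG \in L^2_{\alpha+\beta}(\C)$, i.e. that $\|FG\|_{\alpha+\beta} < \infty$.

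To that end, I would exploit the fact that the exponent $\alpha+\beta$ splits the Gaussian weight as $e^{-(\alpha+\beta)|z|^2} = e^{-\alpha|z|^2}\,e^{-\beta|z|^2}$, so that the integrand factors as $|F(z)|^2 e^{-\alpha|z|^2}$ times $|G(z)|^2 e^{-\beta|z|^2}$. The key observation is that the pointwise estimate \eqref{eq:fockptwise} applied to $F$ yields $|F(z)|^2 e^{-\alpha|z|^2} \le \|F\|_\alpha^2$ uniformly in $z \in \C$. Pulling this uniform bound out of the integral leaves precisely $\int_\C |G(z)|^2 e^{-\beta|z|^2}\,dA(z)$, which is finite and equals $\tfrac{\pi}{\beta}\|G\|_\beta^2$ by the definition of the Fock norm. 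This would yield the clean estimate $\|FG\|_{\alpha+\beta}^2 \le \|F\|_\alpha^2\,\|G\|_\beta^2$, establishing both finiteness and membership in $\mathcal{F}^2_{\alpha+\beta}(\C)$.

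I do not expect a genuine obstacle here; the only subtlety worth flagging is that square-integrability of $F$ and $G$ against their respective weights does not by itself imply square-integrability of the product against the combined weight, since pointwise multiplication is not bounded $L^2 \times L^2 \to L^2$ in general. What rescues the argument is precisely the reproducing-kernel pointwise bound \eqref{eq:fockptwise}, which converts the $L^2$-control of $F$ into the uniform $L^\infty$-control needed to treat $F$ as a bounded multiplier. By symmetry in the roles of $F$ and $G$, one could equally place the uniform bound on $G$ instead, which is reassuring as a consistency check.
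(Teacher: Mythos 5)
Your argument is exactly the paper's proof: apply the pointwise bound \eqref{eq:fockptwise} to $F$, pull the resulting uniform constant out of the integral, and recognize what remains as the $\beta$-weighted norm of $G$. The only slip is in the final constant — keeping track of the normalizations one gets $\|FG\|_{\alpha+\beta}^2 \le \tfrac{\alpha+\beta}{\beta}\|F\|_{\alpha}^2\|G\|_{\beta}^2$ rather than $\|F\|_{\alpha}^2\|G\|_{\beta}^2$ — which is immaterial for the finiteness conclusion.
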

\begin{proof}
Using the pointwise estimate given in equation \eqref{eq:fockptwise} shows that
\begin{align*}
    \|FG\|_{\alpha+\beta}^2 &= \frac{\alpha+\beta}\pi \int_\C |F(z)G(z)|^2 e^{-(\alpha+\beta)|z|^2} \,dA(z) \\
    &\le \frac{\alpha+\beta}\pi \|F\|_{\alpha}^2 \int_\C |G(z)|^2 e^{-\beta|z|^2}\,dA(z)\\
    &=  \frac{\alpha+\beta}\beta \|F\|_{\alpha}^2 \|G\|_{\beta}^2 < \infty,
\end{align*}
as desired.
\end{proof}

Given a function $F \in \mathcal{F}^2_\alpha(\C)$, we define the Bargmann-shift of $F$ by $a \in \C$ as
\begin{equation}\label{eq:shift_operator}
    T_aF(z) \coloneqq e^{\alpha z \overline{a} - \frac{\alpha}{2}|a|^2} F(z-a).
\end{equation}
According to \cite[Proposition 2.38]{zhu:fock}, the operator $T_a$ is a unitary operator on $\mathcal{F}^2_\alpha(\C)$ for every $a \in \C$.
If $f \in \lt$, then the Bargmann transform $Bf$ of $f$, defined by 
$$
Bf(z)\coloneqq 2^{1/4} \int_\R f(t) e^{2\pi t z -\pi t^2-\frac{\pi}2 z^2}\,dt,
$$
satisfies $Bf \in \mathcal{F}_\pi^2(\C)$. In fact, the Bargmann transform $B$ is a unitary map from $L^2(\R)$ onto $\mathcal{F}_\pi^2(\C)$ \cite[Theorem 3.4.3]{Groechenig}.
There is an intimate relationship between the STFT with Hermite windows and the Bargmann transform. 
For our purposes two identities will be of significance: with  
\begin{equation}\label{def:eta}
    \eta(z):=\exp \left(-\pi ixy+\frac\pi2 |z|^2\right), \quad z=x+iy\in\C,
\end{equation}
it holds for all $f\in L^2(\R)$ and for all $z=x+iy$ that 
\begin{align}
    V_{h_0} f(x,-y) \cdot \eta(z)&= Bf(z),\label{eq:Vh0bargmann} \\
    V_{h_1} f(x,-y) \cdot \eta(z)&= (Bf)'(z) -\pi \bar{z} Bf(z). \label{eq:Vh1bargmann}
\end{align}

Note that, up to a non-zero weighting factor, the right-hand side of equation \eqref{eq:Vh1bargmann} defines the so-called translation-invariant derivative of $Bf$ \cite[Remark 1]{brekke1993density}. Derivatives of this type play a central role in the theory of sampling with derivatives in Fock space, as discussed in a classical paper by Brekke and Seip \cite{brekke1993density}.

\subsection{Sets of uniqueness and sampling}

Recall that a set $Z \subseteq \C$ is a set of uniqueness for $\mathcal{F}_\alpha^2(\C)$, if every function in $\mathcal{F}_\alpha^2(\C)$ that vanishes on $Z$ must vanish identically (since $\mathcal{F}_\alpha^2(\C)$ is a vector space, this definition of a set of uniqueness is consistent with the one introduced in Section \ref{sec:term_not}). If $Z= \{ z_n : n \in \N \}$ is a sequence of distinct points in $\C$, then $Z$ is said to be a set of stable sampling for $\mathcal{F}_\alpha^2(\C)$ if there exists a constant $C>0$ such that
$$
C^{-1} \| F \|_{\alpha}^2 \leq \sum_{n=1}^\infty |F(z_n)|^2 e^{-\alpha |z_n|^2} \leq C \| F \|_\alpha^2
$$
for all $F \in \mathcal{F}_\alpha^2(\C)$. Clearly, every set of stable sampling is a set of uniqueness. Further, we say that $Z \subseteq \C$ is separated (or: uniformly discrete) if
$$
\inf_{\substack{z,z' \in Z \\ z \neq z'}} |z - z'| > 0.
$$
Denoting by $B_r(w) \coloneqq \{ z \in \C : |z-w|<r \}$ the open ball of radius $r>0$ around $w \in \C$ and by $\#(\Omega)$ the number of elements in $\Omega \subseteq \C$, then the lower Beurling density of $Z \subseteq \C$ is defined as
$$
D^-(Z) \coloneqq \liminf_{r \to \infty} \left ( \inf_{w \in \C} \frac{\#(Z \cap B_r(w))}{\pi r^2} \right ).
$$
For instance, if $Z = A\Z^2$, $A \in \mathrm{GL}_2(\R)$, is a lattice then $D^-(Z) = |\det(A)|^{-1}$. Among lattices, sets of uniqueness are characterized in terms of their density.

\begin{theorem}[Perelomov \cite{perelomov71}]\label{thm:perelomovuniq}
A lattice $A\Z^2 \subseteq \C$, $A \in \mathrm{GL}_2(\R)$, is a set of uniqueness for $\mathcal{F}_\alpha^2(\C)$ if and only if $|\det(A)|^{-1} \geq \alpha/\pi$.
\end{theorem}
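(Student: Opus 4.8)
The plan is to reduce everything to the standard weight $\alpha=\pi$ and then prove the two inequalities separately, the boundary case $|\det A|^{-1}=\alpha/\pi$ being the genuine difficulty. I would first normalise the weight: the dilation $F\mapsto F(\sqrt{\pi/\alpha}\,\cdot)$ is a unitary isomorphism from $\mathcal{F}^2_\alpha(\C)$ onto $\mathcal{F}^2_\pi(\C)$ which sends a function vanishing on $\Lambda=A\Z^2$ to one vanishing on $\sqrt{\alpha/\pi}\,\Lambda$, and a short computation shows the latter lattice has Beurling density $(\pi/\alpha)|\det A|^{-1}$. Under this reduction the threshold $|\det A|^{-1}\ge \alpha/\pi$ becomes exactly ``$\Lambda$ has density $\ge 1$'', so it suffices to prove that a lattice $\Lambda\subseteq\C$ of covolume $v=|\det A|$ is a set of uniqueness for $\mathcal{F}^2_\pi(\C)$ if and only if $v\le 1$.

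For the necessity direction I would argue by contraposition and construct, for every subcritical lattice ($v>1$), a nonzero Fock function vanishing on $\Lambda$. The natural candidate is a Gaussian-corrected Weierstrass sigma function $S_\Lambda(z)=\sigma_\Lambda(z)\,e^{c z^2}$, where $\sigma_\Lambda$ is the sigma function of $\Lambda$ and the constant $c$ (determined by the quasi-periods of $\Lambda$) is chosen so that $|S_\Lambda(z)|^2 e^{-(\pi/v)|z|^2}$ becomes $\Lambda$-periodic. This makes $S_\Lambda$ entire, vanishing simply and exactly on $\Lambda$, with the sharp two-sided growth $|S_\Lambda(z)|\asymp \mathrm{dist}(z,\Lambda)\,e^{\pi|z|^2/(2v)}$. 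Consequently $|S_\Lambda(z)|^2 e^{-\pi|z|^2}\asymp \mathrm{dist}(z,\Lambda)^2\, e^{\pi(1/v-1)|z|^2}$, whose integral over $\C$ is finite precisely when $v>1$; hence $S_\Lambda\in\mathcal{F}^2_\pi(\C)\setminus\{0\}$ and $\Lambda$ fails to be a set of uniqueness.

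The sufficiency direction splits into a soft supercritical case and a delicate critical case. For $v<1$ I would combine the reproducing-kernel growth bound \eqref{eq:fockptwise}, which gives $|F(z)|\le \|F\|_\pi\,e^{\pi|z|^2/2}$ for every $F\in\mathcal{F}^2_\pi(\C)$, with Jensen's formula. After a harmless shift by the unitary operator $T_a$ of \eqref{eq:shift_operator} we may assume $F(0)\neq 0$, and Jensen yields $\int_0^R \tfrac{n(t)}{t}\,dt\le \tfrac{\pi R^2}{2}+O(\log R)$ for the zero-counting function $n$ of $F$. On the other hand, if $F$ vanishes on $\Lambda$ then $n(t)\ge \#(\Lambda\cap B_t(0))=\tfrac{\pi t^2}{v}(1+o(1))$ by lattice point counting, whence $\int_0^R \tfrac{n(t)}{t}\,dt\ge \tfrac{\pi R^2}{2v}(1+o(1))$. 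For $v<1$ these two estimates are incompatible as $R\to\infty$ unless $F\equiv 0$.

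The main obstacle is the critical case $v=1$, where the Jensen comparison degenerates into the consistent inequality $\tfrac{\pi R^2}{2}\lesssim \tfrac{\pi R^2}{2}+O(\log R)$ and carries no information. Here I would instead divide out the zeros: given $F\in\mathcal{F}^2_\pi(\C)$ vanishing on the critical lattice $\Lambda$, the quotient $H=F/S_\Lambda$ is entire, since $F$ vanishes at least to first order at each point of the zero set of $S_\Lambda$. Using the exact growth $|S_\Lambda(z)|\asymp\mathrm{dist}(z,\Lambda)\,e^{\pi|z|^2/2}$ together with $|F(z)|\le\|F\|_\pi e^{\pi|z|^2/2}$, one gets $|H(z)|\lesssim \mathrm{dist}(z,\Lambda)^{-1}$ away from the lattice, and then the maximum principle on small disks around each lattice point upgrades this to a global bound; by Liouville, $H$ is constant and $F=c\,S_\Lambda$. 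But at $v=1$ the function $|S_\Lambda(z)|^2e^{-\pi|z|^2}\asymp \mathrm{dist}(z,\Lambda)^2$ is $\Lambda$-periodic and does not decay, so $S_\Lambda\notin\mathcal{F}^2_\pi(\C)$; membership of $F$ in the Fock space forces $c=0$, i.e. $F\equiv 0$. Extracting the boundedness of $H$ from the borderline growth rates and the quasi-periodicity of $S_\Lambda$ near the lattice points is the technical crux of the whole argument.
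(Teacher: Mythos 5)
The paper offers no proof of this statement: it is quoted from Perelomov and used as a black box (only the ``if'' direction, i.e.\ sufficiency of the density condition for uniqueness, is actually invoked later, in the lattice refinement of Theorem \ref{theorem:lattice_result}). Your proposal therefore necessarily takes a different route, and it is a correct one -- essentially the modern complex-analytic proof of Perelomov's completeness criterion. The reduction to $\alpha=\pi$ by dilation is right (the dilation is unitary between the two Fock spaces and rescales the density by $\pi/\alpha$, turning the threshold into covolume $v\le 1$), and the three regimes are handled by the standard tools: for $v>1$ the Gaussian-corrected sigma function $S_\Lambda$ lies in $\mathcal{F}^2_\pi(\C)$ because $\mathrm{dist}(z,\Lambda)^2e^{\pi(1/v-1)|z|^2}$ is integrable; for $v<1$ Jensen's formula against the pointwise bound \eqref{eq:fockptwise} yields the incompatible estimates $\pi R^2/(2v)\le \pi R^2/2+O(R)$; and for $v=1$ the quotient $F/S_\Lambda$ is a bounded entire function, hence constant, with the constant forced to vanish because $|S_\Lambda|^2e^{-\pi|z|^2}\asymp\mathrm{dist}(z,\Lambda)^2$ is periodic and non-integrable. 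Two ingredients would need to be written out in a complete version: the existence of the constant $c$ making $|S_\Lambda(z)|^2e^{-(\pi/v)|z|^2}$ lattice-periodic rests on the Legendre relation for the quasi-periods of $\sigma_\Lambda$, and the two-sided estimate $|S_\Lambda(z)|\asymp\mathrm{dist}(z,\Lambda)\,e^{\pi|z|^2/(2v)}$ then follows from periodicity and compactness of a fundamental domain; you flag both, so I see no gap. Compared with Perelomov's original argument via theta functions and coherent-state overlaps, your sigma-function version stays entirely inside $\mathcal{F}^2_\pi(\C)$ and cleanly isolates the critical case $v=1$, which is precisely where this uniqueness theorem differs from the strict-inequality sampling characterization of Theorem \ref{thm:seipsampling} also quoted in the paper.
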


For general separated sets one has the following characterization of sets of stable sampling.

\begin{theorem}[Lyubarskii \cite{lyubarskiiframes}, Seip and Wallstén \cite{Seip1992}]\label{thm:seipsampling}
If $Z \subseteq \C$ is separated then $Z$ is a set of stable sampling for $\mathcal{F}_\alpha^2(\C)$ if and only if $D^-(Z) > \alpha/\pi$.
\end{theorem}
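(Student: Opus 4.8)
The plan is to first reduce to the case $\alpha = \pi$ by the dilation $z \mapsto \sqrt{\alpha/\pi}\,z$, which maps $\mathcal{F}^2_\alpha(\C)$ isometrically onto $\mathcal{F}^2_\pi(\C)$ and rescales densities so that the threshold $\alpha/\pi$ becomes $1$; this normalization lets me invoke Perelomov's lattice criterion (Theorem \ref{thm:perelomovuniq}) directly. I would then split the equivalence into sufficiency ($D^-(Z) > 1 \Rightarrow$ sampling) and necessity (sampling $\Rightarrow D^-(Z) > 1$). The right-hand (upper) sampling inequality is the benign half and holds for \emph{every} separated $Z$ irrespective of density: from the pointwise bound \eqref{eq:fockptwise} combined with a sub-mean-value estimate one obtains $|F(z_n)|^2 e^{-\pi|z_n|^2} \lesssim \int_{B_\delta(z_n)} |F(w)|^2 e^{-\pi|w|^2}\,dA(w)$, and separation makes the discs $B_\delta(z_n)$ pairwise disjoint, so summing over $n$ yields $\sum_n |F(z_n)|^2 e^{-\pi|z_n|^2} \lesssim \|F\|_\pi^2$.

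The substance of sufficiency is the lower inequality, and here I would run a normal-families/weak-compactness argument. Supposing it fails, I obtain unit-norm $F_k \in \mathcal{F}^2_\pi(\C)$ with $\sum_n |F_k(z_n)|^2 e^{-\pi|z_n|^2} \to 0$; after recentring by the unitary shift operators $T_{a_k}$ of \eqref{eq:shift_operator} (the centres $a_k$ chosen so that a fixed fraction of the mass sits in a fixed disc) I extract a locally uniform limit $F$ via Montel's theorem and the norm bound, with $F \not\equiv 0$, while the recentred sets $Z - a_k$ converge to a limit set $Z'$ satisfying $D^-(Z') \ge D^-(Z) > 1$ and on which $F$ vanishes. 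To reach a contradiction I then need the uniqueness half of the density theory: any separated set with $D^- > 1$ is a set of uniqueness for $\mathcal{F}^2_\pi(\C)$. This I would derive from Perelomov's lattice result by a comparison argument, or directly by balancing the Weierstrass product of a hypothetical vanishing function against the Fock growth via Jensen's formula and a zero-counting estimate.

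For necessity the same recentring machinery shows that the lower sampling inequality, \emph{with a uniform constant}, is inherited by weak limits, so every limit set is again sampling and a fortiori a uniqueness set; the complementary uniqueness estimate ($D^- < 1 \Rightarrow$ not a uniqueness set, via an interpolation construction) then forces $D^-(Z) \ge 1$. The delicate point is ruling out equality: I would exclude $D^-(Z) = 1$ by exhibiting, at the critical density, a normalized Fock function that is large at one point yet small at every $z_n$, built from the Weierstrass sigma (theta) function of a reference lattice of density exactly $1$. This exploits the sharp dichotomy at the threshold — a critical-density set can be a uniqueness set but can never support the quantitative lower bound — and is the genuinely hard complex-analytic heart of the theorem, resting on precise growth and decay estimates for sigma-type products. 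In summary, the main obstacles are the two density results for uniqueness (and the strict inequality at the threshold) together with the nonvanishing of the weak limit; everything else is compactness bookkeeping and the elementary upper estimate.
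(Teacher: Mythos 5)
You should first note a mismatch of scope: the paper does not prove Theorem \ref{thm:seipsampling} at all — it is imported verbatim from Lyubarski\u{\i} \cite{lyubarskiiframes} and Seip--Wallst\'en \cite{Seip1992} as a preliminary — so what you have written is an attempted reconstruction of a deep external theorem, not of an argument the paper supplies. Your outline does capture the known architecture (dilation to $\alpha=\pi$; the Plancherel--P\'olya-type upper bound from \eqref{eq:fockptwise} plus separation, which is indeed routine and correct; Beurling-style weak limits of translates; sigma-function constructions at the critical density), but two of its load-bearing steps have genuine gaps beyond the ones you flag yourself.

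First, the recentring step in your sufficiency argument fails as stated in the $L^2$ setting. For unit-norm $F_k\in\mathcal{F}^2_\pi(\C)$ there need not exist centres $a_k$ placing ``a fixed fraction of the mass in a fixed disc'': the mass can spread out, e.g.\ normalized sums of $N$ reproducing kernels at widely separated points satisfy $\|F_k\|_\pi=1$ while $\sup_z|F_k(z)|e^{-\pi|z|^2/2}\asymp N^{-1/2}\to 0$, in which case every locally uniform limit of the recentred functions $T_{-a_k}F_k$ vanishes identically and no contradiction with uniqueness is obtained. This is precisely why Beurling's compactness scheme is run in the sup-norm (Bernstein-type) Fock space, where normalization pins the mass, with a nontrivial transfer back to $L^2$, or why Lyubarski\u{\i} and Seip--Wallst\'en instead prove the $L^2$ lower bound through explicit sigma-function machinery; that transfer is substantive mathematics, not ``compactness bookkeeping''. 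Second, your necessity argument invokes the lemma ``$D^-(W)<1\Rightarrow W$ is not a set of uniqueness'', and this lemma is \emph{false}: as the paper itself recalls in Section \ref{section:fock_spaces}, Ascensi, Lyubarski\u{\i} and Seip \cite{ASCENSI2009277} exhibit uniqueness sets for $\mathcal{F}^2_\alpha(\C)$ with lower Beurling density zero, so uniqueness sets are not density-characterized and necessity cannot be reduced to a bare uniqueness statement about weak limits; the actual proofs exploit the quantitative sampling inequality itself (Landau-type concentration-operator comparisons, or perturbation/thinning arguments, which are also what exclude the critical case $D^-=1$). A smaller but real defect of the same kind: deriving ``separated with $D^->1$ implies uniqueness'' by comparison with Perelomov's lattice criterion (Theorem \ref{thm:perelomovuniq}) does not work, since such a set need not contain or dominate a lattice, and the naive Jensen count you propose only bounds the zero density of a nonzero $F\in\mathcal{F}^2_\pi(\C)$ by $2/\log 2>1$ per unit area; the finer averaged-Jensen argument is needed. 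In sum, the skeleton is the right one, but the three ingredients you defer — nonvanishing of the weak limit, the density bound for uniqueness, and strictness at the threshold — are the theorem, and one of your proposed reductions rests on a statement the paper explicitly contradicts.
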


With the aid of the Bargmann-shift, we can easily verify that the property of a set $Z \subseteq \C$ being a set of uniqueness for $\mathcal{F}_\alpha^2(\C)$ is invariant under translations. For if $Z$ is a set of uniqueness and if $v \in \C$ then a function $F \in \mathcal{F}_\alpha^2(\C)$ vanishes on $v+Z = \{ v+z : z \in Z \}$ if and only if $T_{-v}F \in \mathcal{F}_\alpha^2(\C)$ vanishes on $Z$. Hence, $T_{-v}F \equiv 0$ and the fact that $T_{-v}F$ equals to $F(\cdot + v)$ up to a non-zero weighting factor forces $F$ to vanish identically.

Notice that in contrast to sets of stable sampling, the general characterization of sets of uniqueness in $\mathcal{F}_\alpha^2(\C)$ constitutes a difficult problem, and sets of uniqueness can have an unexpected structure. For instance, Ascensi, Lyubarskii and Seip showed that if
$$
\mathcal{R} \coloneqq \{ (1,0),(-1,0) \} \cup \{ (\pm \sqrt{2n},0) : n \in \N \} \cup \{ (0,\pm \sqrt{2n}) : n \in \N \}
$$
then $\mathcal{R}$ is a set of uniqueness for $\mathcal{F}_\pi^2(\C)$. The set $\mathcal{R}$ has the property that there exist arbitrarily large disks containing no points from $\mathcal{R}$ \cite{ASCENSI2009277}. In particular, it holds that $D^-(\mathcal{R}) = 0$. Finally, we mention a necessary density condition obtained by Belov, Borichev and Kuznetsov who proved in \cite{BELOV2020438} that if $Z \subseteq \C$ is a set of uniqueness for $\mathcal{F}_\pi^2(\C)$ then
$$
\limsup_{r \to \infty} \frac{\#(Z \cap B_r(0))}{\pi r^2} \geq \frac{1}{3\pi}.
$$

\section{Phase retrieval in $\C^2$}\label{section:pr_C2}

The objective of the present section is to establish a full characterization of phase retrieval in $\C^2$. Precisely, we prove the following statement.

\begin{theorem}\label{thm:prinC2_2}
Let $\varphi_0,\varphi_1,\varphi_2,\varphi_3\in \C^2$. 
Moreover, define
\begin{equation*}
    \lambda_k: = \langle \varphi_k, \varphi_0\rangle ,
    \quad 
    \mu_k:= \langle\varphi_k, \bigl( \begin{smallmatrix}0 & -1\\ 1 & 0\end{smallmatrix}\bigr) \overline{\varphi_0} \rangle, 
    \quad k\in\{1,2,3\}.
\end{equation*}
The following statements are equivalent:
\begin{enumerate}[i)]
\item $\{\varphi_0, \varphi_1,\varphi_2,\varphi_3\}\subseteq \C^2$ does phase retrieval.
\item
It holds that $\mu_k\neq 0$, $k\in\{1,2,3\}$ and that the numbers 
$
\lambda_1/\mu_1, \lambda_2/\mu_2, \lambda_3/\mu_3
$
are not collinear.
\end{enumerate}
\end{theorem}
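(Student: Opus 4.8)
The plan is to show that, after a harmless normalization, phase retrieval for $\{\varphi_0,\dots,\varphi_3\}$ in $\C^2$ is equivalent to the injectivity of a map recording the distances of a single complex number to three fixed points, and that such a distance map is injective precisely when the three points are not collinear. Write $J=\bigl(\begin{smallmatrix}0 & -1\\ 1 & 0\end{smallmatrix}\bigr)$.

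First I would normalize. Whether a finite family does phase retrieval is unchanged if one multiplies each frame vector by a nonzero scalar (the moduli $|\langle z,\varphi_j\rangle|$ only rescale by a known positive factor) and if one applies a fixed unitary $U$ to all of them (replace $z$ by $U^*z$). Since $\langle\varphi_0, J\overline{\varphi_0}\rangle = 0$ and $\|J\overline{\varphi_0}\| = \|\varphi_0\|$, assuming $\varphi_0\neq 0$ (if $\varphi_0=0$ both (i) and (ii) fail), the pair $\{\varphi_0, J\overline{\varphi_0}\}$ is an orthogonal basis of $\C^2$; choosing $U$ with $U\varphi_0 = \|\varphi_0\| e_1$ one checks, using the identity $UJ = \det(U)\,J\overline U$ valid for unitary $U$, that $U(J\overline{\varphi_0})$ is a scalar multiple of $e_2 = J\overline{e_1}$. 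Under $U$ the numbers $\lambda_k$ are unchanged and the $\mu_k$ are multiplied by the common unimodular factor $\det(U)$, while the individual rescalings multiply each pair $(\lambda_k,\mu_k)$ by a common nonzero scalar; hence the vanishing pattern of the $\mu_k$ and the collinearity of the ratios $\lambda_k/\mu_k$ are both preserved. I may therefore assume $\varphi_0 = e_1$, in which case $\lambda_k = (\varphi_k)_1$ and $\mu_k = (\varphi_k)_2$ are simply the coordinates of $\varphi_k$.

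Next I would rewrite the phaseless data. For $z=(z_1,z_2)$ the vector $\varphi_0$ contributes $|\langle z,\varphi_0\rangle| = |z_1|$, and for any $k$ with $\mu_k\neq 0$ one has $\langle z,\varphi_k\rangle = \overline{\mu_k}(\overline{\lambda_k/\mu_k}\,z_1 + z_2)$. If $z_1=0$ then the $\varphi_0$-equation forces $w_1=0$ and the remaining equations reduce to $|z_2|=|w_2|$, so $z\sim w$ automatically; thus any potential counterexample to phase retrieval must have $z_1,w_1\neq 0$. In that regime set $s := z_2/z_1$ and $s':=w_2/w_1$, and note that $z\sim w$ holds if and only if $|z_1|=|w_1|$ and $s=s'$. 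Dividing out the known equal factors $|\mu_k|\,|z_1| = |\mu_k|\,|w_1|$, the equation $|\langle z,\varphi_k\rangle| = |\langle w,\varphi_k\rangle|$ becomes $|s - p_k| = |s' - p_k|$ with $p_k := -\overline{\lambda_k/\mu_k}$, whereas for any index with $\mu_k=0$ the corresponding equation only repeats $|z_1|=|w_1|$ and carries no information about $s$. Consequently $\{\varphi_0,\dots,\varphi_3\}$ does phase retrieval if and only if the map $s\mapsto (|s-p_k|)_{k:\,\mu_k\neq 0}$ is injective on $\C$. It then remains to prove the elementary geometric fact that $s\mapsto(|s-p_k|)_k$ is injective on $\C\simeq\R^2$ if and only if the points $p_k$ are not contained in a common line: if $|s-p_k|=|s'-p_k|$ with $s\neq s'$ then every $p_k$ lies on the perpendicular bisector of $[s,s']$, which is a line, so the $p_k$ are collinear; conversely, if the $p_k$ lie on a line $L$ then reflection across $L$ fixes each $p_k$ while moving any point off $L$, destroying injectivity. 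Since two or fewer points always lie on a line, injectivity requires the presence of all three points, i.e. $\mu_1,\mu_2,\mu_3\neq 0$, together with non-collinearity of $p_1,p_2,p_3$; and because $\zeta\mapsto-\overline\zeta$ is an affine bijection preserving collinearity, the $p_k$ are collinear exactly when the ratios $\lambda_k/\mu_k$ are. This is precisely the negation of condition (ii), which closes the equivalence.

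I expect the main obstacle to be the careful bookkeeping in the reduction rather than any single hard estimate. One must verify that the normalizing unitary sends $J\overline{\varphi_0}$ to a multiple of $e_2$ and only rescales the $\mu_k$ by a common phase, so that condition (ii) is genuinely invariant; and one must track the global-phase ambiguity correctly, ensuring that passing to the single ratio $s=z_2/z_1$ faithfully encodes the equivalence $z\sim w$, and that the degenerate cases $z_1=0$ and $\mu_k=0$ are properly absorbed (they never yield counterexamples, respectively reduce the number of active distance constraints). Once these reductions are secured, the decisive lemma on distances to three points is completely elementary.
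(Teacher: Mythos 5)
Your argument is correct and follows essentially the same route as the paper: normalize so that $\varphi_0$ becomes a multiple of $e_1$ (the paper uses the explicit invertible matrix $\bigl(\begin{smallmatrix}\bar p & \bar q\\ -q & p\end{smallmatrix}\bigr)$, which sends $\varphi_k$ directly to $(\lambda_k,\mu_k)^T$ and so avoids the $UJ=\det(U)\,J\overline{U}$ bookkeeping), pass to the ratio $z_2/z_1$, and invoke the perpendicular-bisector characterization of when three distances determine a point in the plane. The only spot worth tightening is the parenthetical claim that (i) fails when $\varphi_0=0$: this rests on the fact that no three vectors do phase retrieval in $\C^2$, which the paper justifies by citing the $4n-4$ theorem, though your own distance-map reduction applied to a nonzero $\varphi_1$ would also yield it, since at most two distance constraints survive and two points are always collinear.
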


\subsection{Auxiliary results}

Before we turn towards the proof of Theorem \ref{thm:prinC2_2}, we collect a few important facts.
Firstly, we require the fact that the property of doing phase retrieval is invariant under invertible linear transformations.
\begin{lemma}\label{lem:prframetrafo}
Let $\Phi=\{\varphi_k\}_{k\in I}\subseteq \C^n$ and $A \in \mathrm{GL}_n(\C)$. Then $\Phi$ does phase retrieval in $\C^n$ if and only if $\tilde{\Phi}:=\{A\varphi_k\}_{k\in I}$ does phase retrieval in $\C^n$.
\end{lemma}
\begin{proof}
Clearly -- since one may replace $\Phi$ by $\tilde{\Phi}$ and $A$ by $A^{-1}$, respectively -- it suffices to show one implication.

Let us assume that $\Phi$ does phase retrieval and suppose that 
$z,z'\in \C^n$ are such that 
\begin{equation}\label{eq:assptPhidoespr}
|\langle z,A\varphi_k\rangle| = |\langle z',A\varphi_k\rangle|,~\quad \forall k\in I.
\end{equation}
We need to show that $z\sim z'$.
It follows from \eqref{eq:assptPhidoespr} that 
$$
|\langle A^* z, \varphi_k\rangle| = |\langle A^* z',\varphi_k\rangle|, \quad \forall k\in I.
$$
Since $\Phi$ does phase retrieval this implies that $A^*z \sim A^* z'$. 
Since $A^*$ is invertible we get that $z \sim z'$, and are done.
\end{proof}

Furthermore, the following geometric condition which guarantees that a point in the plane is uniquely determined by its distance from three fixed points, will play a central role.
\begin{lemma}\label{lemma:collinear}
Let $z,w,a_1,a_2,a_3 \in \C$ such that
\begin{equation}\label{eq:threedistances}
|z-a_j| = |w-a_j|,\quad j\in\{1,2,3\}.
\end{equation}
If $a_1,a_2,a_3$ are not collinear then $z=w$.
Conversely, if $a_1,a_2,a_3$ are collinear there exist $z\neq w$ such that \eqref{eq:threedistances} holds.
\end{lemma}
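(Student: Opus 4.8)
The plan is to interpret the equidistance conditions geometrically: the locus of points equidistant from two distinct points $z$ and $w$ is the perpendicular bisector of the segment joining them, which is a line. I would prove the first (forward) implication by contraposition, showing that if $z \neq w$ then $a_1, a_2, a_3$ must be collinear, and then establish the converse by reversing the construction.

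To carry out the forward direction, I would square the hypothesis and expand. Writing $|z-a_j|^2 = |z|^2 - 2\re(z\overline{a_j}) + |a_j|^2$ and likewise for $w$, the equality $|z-a_j|^2 = |w-a_j|^2$ rearranges (the terms $|a_j|^2$ cancel) into the real-linear relations
\[
2\re\!\left( (w-z)\overline{a_j} \right) = |w|^2 - |z|^2, \qquad j \in \{1,2,3\}.
\]
Setting $c := w - z$, the assumption $z \neq w$ gives $c \neq 0$, so the map $a \mapsto 2\re(c\overline{a})$ is a nonconstant real-affine functional on $\C \simeq \R^2$ whose level sets are lines. The three displayed equations assert precisely that $a_1, a_2, a_3$ all lie on the single line $\{ a \in \C : 2\re(c\overline{a}) = |w|^2 - |z|^2 \}$, hence are collinear. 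This contradicts the hypothesis that they are not collinear, forcing $z = w$.

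For the converse, I would reverse this construction. If $a_1, a_2, a_3$ are collinear they lie on a common line $\ell \subseteq \C$. Choosing any point $m \in \ell$ and any nonzero $\nu \in \C$ orthogonal to the direction of $\ell$, I set $z := m + \nu$ and $w := m - \nu$. Then $z \neq w$, their midpoint $m$ lies on $\ell$, and the segment joining $z$ and $w$ is perpendicular to $\ell$; thus $\ell$ is exactly the perpendicular bisector of that segment, and every point of $\ell$ --- in particular each $a_j$ --- is equidistant from $z$ and $w$. This produces the desired pair $z \neq w$ satisfying the equidistance conditions.

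The argument is elementary plane geometry and I do not anticipate a genuine obstacle; the only points requiring a little care are confirming that $c \neq 0$ makes $a \mapsto 2\re(c\overline{a})$ genuinely nonconstant (so that its level set is a line rather than all of $\C$), and, in the converse, choosing the offset $\nu$ in the normal direction of $\ell$ so that $\ell$ becomes the perpendicular bisector.
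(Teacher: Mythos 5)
Your proposal is correct and follows essentially the same route as the paper: both arguments rest on the fact that the locus of points equidistant from $z \neq w$ is the perpendicular bisector (a line), and both converses place $z$ and $w$ as reflections of one another across the line through $a_1,a_2,a_3$. The only difference is cosmetic --- you verify the bisector fact by expanding $|z-a_j|^2 = |w-a_j|^2$ into a real-linear equation in $a_j$, whereas the paper simply invokes the geometric characterization.
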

\begin{proof}
Suppose by contradiction that $z \neq w$. Then there exists a unique line segment $L$ of finite length connecting $z$ and $w$. The assumption that $|z-a_j| = |w-a_j|$ for all $j \in \{ 1,2,3 \}$ implies that $a_1,a_2,a_3$ lie on the unique perpendicular bisector of the line segment $L$. This contradicts the assumption that $a_1,a_2,a_3$ are not collinear.

For the second assertion let $J$ denote a line which contains the three points $a_1,a_2,a_3$. Since the points are collinear such a line exists.
Let $z$ be any point in $\C\setminus J$, and let $w$ be the reflection of $z$ across $J$. Then $z\neq w$ satisfy \eqref{eq:threedistances}. 
\end{proof}
We are now well-equipped to deal with the following simple configuration.
\begin{proposition}\label{prop:prinC2particular}
Let $\beta_1,\beta_2,\beta_3\in \C$. The family 
\begin{equation*}
    \left\{
    \begin{pmatrix}
    1\\
    0
    \end{pmatrix},
    \begin{pmatrix}
    \beta_1\\
    1
    \end{pmatrix},
    \begin{pmatrix}
    \beta_2\\
    1
    \end{pmatrix},
    \begin{pmatrix}
    \beta_3\\
    1
    \end{pmatrix}
    \right\} \subseteq\C^2
\end{equation*}
does phase retrieval if and only if $\beta_1,\beta_2,\beta_3$ are not collinear. 
\end{proposition}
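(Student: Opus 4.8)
The plan is to reduce the phase retrieval property of the four vectors to the geometric condition of Lemma~\ref{lemma:collinear}. The natural parametrization exploits the special structure: the first vector is $e_1 = (1,0)^T$, and the remaining three have second coordinate equal to $1$. Given $z = (z_1,z_2)^T \in \C^2$, the measurement against $e_1$ yields $|\langle z, e_1\rangle| = |z_1|$, while the measurement against $(\beta_j,1)^T$ yields $|\langle z, (\beta_j,1)^T\rangle| = |z_1\overline{\beta_j} + z_2|$. Thus I would work with the two complex quantities $z_1$ and $z_2$ and ask: when do the four moduli $|z_1|$ and $\{|z_1\overline{\beta_j} + z_2|\}_{j=1}^3$ determine $(z_1,z_2)$ up to a common unimodular factor?

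The key reduction is to normalize using the global phase. Suppose $z = (z_1,z_2)^T$ and $w = (w_1,w_2)^T$ produce the same four measurements. First I would handle the degenerate case $z_1 = 0$ separately: then $|w_1| = |z_1| = 0$, so $w_1 = 0$, and the three remaining equations collapse to $|z_2| = |w_2|$, giving $z \sim w$ immediately. In the generic case $z_1 \neq 0$, I would use the global phase freedom to rotate so that $z_1 > 0$; since $|w_1| = |z_1|$, after an independent rotation of $w$ I may also assume $w_1 = z_1 > 0$. The remaining freedom is spent, and the task becomes: show that the conditions $|z_1\overline{\beta_j} + z_2| = |z_1\overline{\beta_j} + w_2|$ for $j \in \{1,2,3\}$, together with $z_1 = w_1 > 0$, force $z_2 = w_2$. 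Dividing through by $z_1 > 0$ and writing $a_j := -\overline{\beta_j}$ (and setting the unknowns $-z_2/z_1$, $-w_2/z_1$ in the role of the two points), the three equations read precisely $|\zeta - a_j| = |\omega - a_j|$ for $j \in \{1,2,3\}$, which is the hypothesis of Lemma~\ref{lemma:collinear}.

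Now I invoke Lemma~\ref{lemma:collinear}: the points $a_1, a_2, a_3 = -\overline{\beta_1}, -\overline{\beta_2}, -\overline{\beta_3}$ are collinear if and only if $\beta_1, \beta_2, \beta_3$ are collinear (complex conjugation and negation are affine maps and preserve collinearity). If $\beta_1,\beta_2,\beta_3$ are \emph{not} collinear, the lemma gives $\zeta = \omega$, hence $z_2 = w_2$, so $z \sim w$ and the family does phase retrieval. Conversely, if they \emph{are} collinear, the second part of Lemma~\ref{lemma:collinear} produces $\zeta \neq \omega$ satisfying the three distance equalities; unwinding the normalization (take $z_1 = w_1 = 1$, $z_2 = -\zeta$, $w_2 = -\omega$) yields vectors $z \not\sim w$ with matching measurements, so phase retrieval fails.

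The main obstacle I anticipate is the bookkeeping of the global phase, namely being careful that after rotating $z$ and $w$ independently to make both first coordinates positive, I have not secretly imposed a relation between them that trivializes the problem, and that the resulting $z_2, w_2$ are genuinely free. One must verify that matching the modulus $|z_1| = |w_1|$ is exactly what permits the two independent rotations, and that the three subsequent equations are untouched by these rotations precisely because they only constrain moduli. Once this normalization is pinned down correctly, the remainder is a direct translation into Lemma~\ref{lemma:collinear}; no genuinely hard analysis remains.
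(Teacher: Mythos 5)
Your argument is correct and follows essentially the same route as the paper: the same split into the cases $z_1=0$ and $z_1\neq 0$, the same reduction of the three remaining modulus equations to the three-distance condition of Lemma~\ref{lemma:collinear}, and the same construction of a non-equivalent pair from the reflection in the collinear case. Only a bookkeeping slip to fix: negating \emph{both} the unknowns ($-z_2/z_1$, $-w_2/z_1$) \emph{and} the centers ($a_j=-\overline{\beta_j}$) gives $|\overline{\beta_j}-z_2/z_1|$ rather than the required $|\overline{\beta_j}+z_2/z_1|$, so you should flip the sign of exactly one of the two (either convention works, since collinearity is preserved).
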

\begin{proof}
We set $\varphi_0:=(1,0)^T$, and $\varphi_k:=(\beta_k,1)^T$, $k\in\{1,2,3\}$. We proceed in two steps and show
\begin{enumerate}[a)]
    \item If $\beta_1, \beta_2,\beta_3$ are not collinear, then $\{\varphi_k\}_{k=0}^3$ does phase retrieval. 
    \item If $\beta_1,\beta_2,\beta_3$ are collinear, then $\{\varphi_k\}_{k=0}^3$ does not do phase retrieval. 
\end{enumerate}
\textbf{Step a)}
Suppose that $\beta_1,\beta_2,\beta_3$ are not collinear, and let $z,w\in \C^2$ with $z=(z_1,z_2)^T$ and $w=(w_1,w_2)^T$ be such that 
\begin{equation}\label{eq:pridentities}
    |\langle z,\varphi_k \rangle| = |\langle w,\varphi_k\rangle|, \quad k\in \{0,1,2,3\}.
\end{equation}
We need to prove that $z\sim w$.

To that end, we distinguish two cases depending on $z_1=0$ or $z_1\neq 0$.
If $z_1=0$ it follows that 
$$
|w_1| = |\langle w,\varphi_0\rangle| = |\langle z,\varphi_0\rangle| = 0,
$$
and 
$$
|w_2|=  |\langle w,\varphi_1\rangle| = |\langle z,\varphi_1\rangle| = |z_2|.
$$
Thus, we have that $z\sim w$ in this case.
If $z_1\neq 0$ we get that 
\begin{equation}
|w_1|^2 = |\langle w,\varphi_0\rangle|^2 = |\langle z,\varphi_0,\rangle|^2 = |z_1|^2\neq 0.
\end{equation}
Furthermore, by assumption we have that 
\begin{equation*}
    |\bar{\beta_k}z_1+z_2| = |\bar{\beta_k}w_1+w_2|, \quad k\in\{1,2,3\}.
\end{equation*}
Division by $|z_1|=|w_1|\neq 0$ implies that 
\begin{equation*}
    \left|\frac{z_2}{z_1}+\overline{\beta_k} \right| =     \left|\frac{w_2}{w_1}+\overline{\beta_k} \right|, \quad k\in\{1,2,3\}.
\end{equation*}
Note that $-\bar{\beta_1},-\bar{\beta_2},-\bar{\beta_3}$ are not collinear since $\beta_1,\beta_2,\beta_3$ are not collinear. Thus, applying Lemma \ref{lemma:collinear} yields that $z_2/z_1=w_2/w_1$, which is equivalent to 
$$
z_2 \frac{w_1}{z_1} = w_2.
$$
Hence with $\tau:=w_1/z_1\in \mathbb{T}$ we have indeed that $\tau z = w$. In particular, this implies that $z\sim w$.

\textbf{Step b)} Suppose now that $\beta_1,\beta_2,\beta_3$ are collinear.
We need to find a pair of vectors $z,w\in \C^2$ which are not equivalent while satisfying \eqref{eq:pridentities}.
According to Lemma \ref{lemma:collinear} there exist complex numbers $p\neq q$ such that $|p-\beta_k|=|q-\beta_k|$ for every $k\in\{1,2,3\}$.
Since $p\neq q$ we have that 
$$
z=\begin{pmatrix}
    -1\\
    \bar{p}
    \end{pmatrix},
    \quad 
w= \begin{pmatrix}
    -1\\
    \bar{q}
    \end{pmatrix}
$$
are not equivalent. Furthermore, it holds that $|\langle z,\varphi_0 \rangle|=1=|\langle w,\varphi_0\rangle|$, and for every $k\in\{1,2,3\}$ that
$$
|\langle z,\varphi_k \rangle| = |p-\beta_k| = |q-\beta_k| = |\langle w,\varphi_k\rangle|,
$$
which settles the proof.
\end{proof}
\subsection{Proof of Theorem \ref{thm:prinC2_2}}
The idea of the proof is to reduce the general situation to the rather particular configuration in Proposition \ref{prop:prinC2particular}.

First observe that if $\varphi_0=0$, the family $\{\varphi_k\}_{k=0}^3$ cannot do phase retrieval. Indeed, assume it does phase retrieval, then also $\{\varphi_k\}_{k=1}^3$ does phase retrieval. This leads to a contradiction as it is know that a family of vectors in $\C^2$ which does phase retrieval consists of at least four elements \cite[Theorem 10]{bandeira_savingphase}.

We may therefore assume that $\varphi_0=(p,q)^T \neq 0$ and define 
$$
A:=
\begin{pmatrix}
\bar{p} & \bar{q}\\
-q & p
\end{pmatrix} \in \mathrm{GL}_2(\C).
$$
We have that 
\begin{equation*}
    A\varphi_0 = \begin{pmatrix}
    |p|^2+|q|^2\\
    0
    \end{pmatrix},
\end{equation*}
and that for every $k\in\{1,2,3\}$
\begin{equation*}
    A\varphi_k = 
    \begin{pmatrix}
    \langle \varphi_k, \varphi_0\rangle\\
    \langle \varphi_k, \bigl( \begin{smallmatrix}0 & -1\\ 1 & 0\end{smallmatrix}\bigr) \overline{\varphi_0} \rangle
    \end{pmatrix}
    = 
    \begin{pmatrix}
    \lambda_k\\
    \mu_k
    \end{pmatrix}.
\end{equation*}
Applying Lemma \ref{lem:prframetrafo} yields that $\{\varphi_k\}_{k=0}^3$ does phase retrieval if and only if 
$$
\tilde{\Phi}:=
\left\{
\begin{pmatrix}
|p|^2+|q|^2\\
0
\end{pmatrix},
\begin{pmatrix}
\lambda_1\\
\mu_1
\end{pmatrix},
\begin{pmatrix}
\lambda_2\\
\mu_2
\end{pmatrix},
\begin{pmatrix}
\lambda_2\\
\mu_2
\end{pmatrix}
\right\}
$$
does phase retrieval.

It remains to show that $\tilde{\Phi}$ does phase retrieval if and only if condition ii) is fulfilled.
We begin with the first implication and assume that $\tilde{\Phi}$ does phase retrieval. It follows that $\mu_k\neq 0$ for $k\in\{1,2,3\}$; indeed if we had that $\mu_k=0$ for some $k$ this would imply that there exists a pair of linearly dependent vectors in $\tilde{\Phi}$. Then we could reduce $\tilde{\Phi}$ to a family of three vectors which does phase retrieval in $\C^2$, and, once more, end up with a contradiction.
Since the property of doing phase retrieval is invariant under multiplication by a non-zero scalar of the individual vectors we get that also 
$$
\Psi:=
\left\{
\begin{pmatrix}
1\\
0
\end{pmatrix},
\begin{pmatrix}
\lambda_1/\mu_1\\
1
\end{pmatrix},
\begin{pmatrix}
\lambda_2/\mu_2\\
1
\end{pmatrix},
\begin{pmatrix}
\lambda_3/\mu_3\\
1
\end{pmatrix}
\right\}
$$
does phase retrieval. 
Invoking Proposition \ref{prop:prinC2particular} yields then that the points
$$\lambda_1/\mu_1, \lambda_2/\mu_2, \lambda_3/\mu_3$$
are not collinear.

For the second implication, assume ii). According to Proposition \ref{prop:prinC2particular} we have that $\Psi$ does phase retrieval. By multiplying each of the individual vectors with appropriate non-zero scalars we obtain that $\tilde{\Phi}$ indeed does phase retrieval, and conclude the proof.

\section{Spectrogram sampling}\label{section:spectrogram_sampling}

This section merges the results presented in Section \ref{section:fock_spaces} and Section \ref{section:pr_C2} and establishes a new approach for deriving phaseless sampling results in the context of the STFT phase retrieval problem. We begin by considering an abstract result on phaseless sampling for general function classes $\mathcal{C} \subseteq \lt$. This is followed by an analysis of the specific case where the function space is the entire space $\lt$, and concludes with an investigation of the function class $\mathcal{C} = L^2(\R,\R)$.

\subsection{Phaseless sampling}\label{subsection:phaseless_sampling}

We start by establishing an abstract phase retrieval statement in order to highlight which ingredients are essential for our results.

\begin{proposition}\label{proposition:abstract_uniqueness_result}
Suppose that $\mathcal{P}\subseteq \C^2$ does phase retrieval.
Moreover, let $\mathcal{C}\subseteq L^2(\R)$ be a function class and let $\Lambda\subseteq\R^2$ be such that $\bar{\Lambda}$
is a set of uniqueness for
\begin{equation}\label{def:AofC}
\mathcal{A}(\mathcal{C}):=\{Bf:~f\in \mathcal{C}\} \cup \{Bf Bh((Bf)'Bh-(Bh)'Bf):~f,h\in\mathcal{C}\}.
\end{equation}
Then it holds that $(\{g_p\}_{p\in\mathcal{P}}, \Lambda)$ does phase retrieval on $\mathcal{C}.$
\end{proposition}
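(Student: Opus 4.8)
The plan is to transfer, for each fixed sampling point separately, the phaseless STFT data into a phase retrieval problem in $\C^2$, and then to eliminate the resulting pointwise phase ambiguity by passing to a single globally defined entire function which is forced to vanish on $\overline{\Lambda}$. Concretely, I would start from a pair $f,h\in\mathcal{C}$ with $|V_{g_p}f(\Lambda)|=|V_{g_p}h(\Lambda)|$ for all $p\in\mathcal{P}$, write $p=(\lambda,\mu)$, and use the conjugate-linearity of the STFT in the window together with the Bargmann identities \eqref{eq:Vh0bargmann} and \eqref{eq:Vh1bargmann} to obtain, for $z=x+iy$,
$$ V_{g_p}f(x,-y)\,\eta(z) = \overline{\lambda}\,Bf(z) + \overline{\mu}\bigl((Bf)'(z)-\pi\overline{z}\,Bf(z)\bigr) = \langle \Phi^f(z),p\rangle, $$
where $\Phi^f(z):=\bigl(Bf(z),\,(Bf)'(z)-\pi\overline{z}\,Bf(z)\bigr)^T\in\C^2$, and analogously $\Phi^h(z)$ for $h$. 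Since $|\eta(z)|=e^{\pi|z|^2/2}>0$ and, by the definition of $\overline{\Lambda}$, the points $(x,-y)$ range over $\Lambda$ exactly when $z$ ranges over $\overline{\Lambda}$, the hypothesis becomes $|\langle\Phi^f(z),p\rangle|=|\langle\Phi^h(z),p\rangle|$ for every $z\in\overline{\Lambda}$ and every $p\in\mathcal{P}$.

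Fixing $z\in\overline{\Lambda}$ and invoking that $\mathcal{P}$ does phase retrieval in $\C^2$, I would conclude $\Phi^f(z)\sim\Phi^h(z)$, i.e.\ there is some $\tau(z)\in\mathbb{T}$ with $Bf(z)=\tau(z)Bh(z)$ and $(Bf)'(z)-\pi\overline{z}\,Bf(z)=\tau(z)\bigl((Bh)'(z)-\pi\overline{z}\,Bh(z)\bigr)$. Substituting the first identity into the second cancels the term $\pi\overline{z}$ and yields $(Bf)'(z)=\tau(z)(Bh)'(z)$ as well. Consequently the Wronskian-type quantity $(Bf)'(z)Bh(z)-(Bh)'(z)Bf(z)$ vanishes at every $z\in\overline{\Lambda}$, and hence so does the entire function $F_{f,h}:=Bf\,Bh\,\bigl((Bf)'Bh-(Bh)'Bf\bigr)$. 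Since $F_{f,h}$ is, by construction, an element of the family $\mathcal{A}(\mathcal{C})$ from \eqref{def:AofC}, the assumption that $\overline{\Lambda}$ is a set of uniqueness for $\mathcal{A}(\mathcal{C})$ forces $F_{f,h}\equiv 0$. This passage is the part I expect to be the crux: the phase $\tau(z)$ is only defined pointwise and need not depend on $z$ in any regular way, yet forming the Wronskian annihilates $\tau(z)$ at each individual point, turning the $z$-dependent ambiguity into one global entire function to which the uniqueness hypothesis can be applied.

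It then remains to upgrade $F_{f,h}\equiv 0$ to $f\sim h$. Because the ring of entire functions is an integral domain, at least one of the three factors $Bf$, $Bh$, $(Bf)'Bh-(Bh)'Bf$ vanishes identically. If $Bf\equiv 0$, then $Bf=\tau Bh$ on $\overline{\Lambda}$ shows that $Bh$ vanishes on $\overline{\Lambda}$; since $Bh\in\mathcal{A}(\mathcal{C})$ and $\overline{\Lambda}$ is a set of uniqueness, $Bh\equiv 0$, so $f=h=0$ and trivially $f\sim h$ (the case $Bh\equiv 0$ is symmetric). In the remaining case $(Bf)'Bh-(Bh)'Bf\equiv 0$ with $Bf,Bh\not\equiv 0$, the quotient $Bf/Bh$ has vanishing derivative off the discrete zero set of $Bh$, whose complement is connected, so $Bf=c\,Bh$ for some constant $c\neq 0$; evaluating at a point $z_0\in\overline{\Lambda}$ with $Bh(z_0)\neq 0$ (such a point exists since $Bh\not\equiv 0$ and $\overline{\Lambda}$ is a uniqueness set) and comparing with $Bf(z_0)=\tau(z_0)Bh(z_0)$ gives $c=\tau(z_0)$, hence $|c|=1$. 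In every case $Bf\sim Bh$, and since $B$ is a linear bijection from $\lt$ onto $\mathcal{F}^2_\pi(\C)$ this is equivalent to $f\sim h$, which completes the argument.
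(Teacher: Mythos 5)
Your proposal is correct and follows essentially the same route as the paper: reduce to a pointwise phase retrieval problem in $\C^2$ at each sample, observe that the entire function $Bf\,Bh\,((Bf)'Bh-(Bh)'Bf)\in\mathcal{A}(\mathcal{C})$ vanishes on $\overline{\Lambda}$ and hence identically, and then run the same three-way case analysis on the factors. The only cosmetic difference is that you deduce the vanishing of the Wronskian directly from the common phase $\tau(z)$ of the two components, whereas the paper forms the combination $|F|^2\,\overline{H}H'-|H|^2\,\overline{F}F'=\overline{FH}\,(FH'-F'H)$; both lead to the same conclusion.
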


\begin{proof}
Suppose $f,h\in \mathcal{C}$ are such that for all $z\in\Lambda$ it holds that 
\begin{equation}\label{eq:sampids}
|V_{g_p}f(z)| = |V_{g_p} h(z)|, \quad p\in \mathcal{P}.
\end{equation}
We need to show that $f\sim h$. 
The proof consists of two principal steps:
First we consider a phase retrieval problem of a local flavour and show that for each point $z\in \Lambda$ it holds that 
\begin{equation}\label{eq:equivsamples}
\begin{pmatrix}
    V_{h_0}f(z)\\V_{h_1}f(z)
\end{pmatrix}
\sim
\begin{pmatrix}
    V_{h_0}h(z)\\V_{h_1}h(z)
\end{pmatrix}.
\end{equation}
The second step consists of combining the local information obtained in step one by making use of the assumed properties of the set $\Lambda$ in order to conclude that $f\sim h$.

\textbf{Step a)}
For any $z\in \Lambda$, $f\in L^2(\mathbb{R})$ and $p=(\lambda,\mu)\in\C^2$ we have that
$$
|V_{g_p}f(z)| = \left|\bar{\lambda}V_{h_0}f(z)+\bar{\mu}V_{h_1}f(z)\right| = \left|\Big\langle
\begin{pmatrix}
V_{h_0} f(z)\\
V_{h_1} f(z)
\end{pmatrix},
p
\Big\rangle \right|.
$$
Since $\mathcal{P}$ does phase retrieval  it follows from
\eqref{eq:sampids} that the equivalence
\eqref{eq:equivsamples}  holds for all $z\in \Lambda$.

\textbf{Step b)}
Let $F,H$ denote the Bargmann transforms of $f$ and $h$, respectively.
Using the identities \eqref{eq:Vh0bargmann} and \eqref{eq:Vh1bargmann} it is easy to see that \eqref{eq:equivsamples} for all $z\in \Lambda$ implies that
\begin{equation}\label{eq:eqivvectorsbargmann}
    \begin{pmatrix}
    F(z)\\
    F'(z)
    \end{pmatrix}
    \sim
    \begin{pmatrix}
    H(z)\\
    H'(z)
    \end{pmatrix}, \quad \forall z\in \bar{\Lambda}
\end{equation}
Thus, we have that 
$$
|F|^2|_{\bar{\Lambda}} = |H|^2|_{\bar{\Lambda}}
\quad \text{and}\quad 
(\overline{F}F')|_{\bar{\Lambda}} = (\overline{H}H')|_{\bar{\Lambda}}.
$$
The pivotal idea consist of combining these two identities in a shrewd manner, namely to make the observation that 
$$
|F|^2 \cdot \overline{H}H' - |H|^2 \cdot \overline{F}F' = \overline{FH} \left(FH'-F'H \right)
$$
vanishes on $\bar{\Lambda}$. Thus, also 
$G:=FH(FH'-F'H)$ vanishes on $\bar{\Lambda}$. Since $G\in \mathcal{A}(\mathcal{C})$ the assumption on $\Lambda$ implies that $G$ vanishes everywhere on $\C$.
Thus, since the product $G$ consists of  factors $F, H, FH'-F'H$ which are entire functions, at least one of them must be the zero function.

If $F=0$, it follows from  \eqref{eq:eqivvectorsbargmann} that $H$ vanishes on $\bar{\Lambda}$. Since $H\in\mathcal{A}(\mathcal{C})$ this implies that also $H$ vanishes identically. Therefore, we have that $f=h=0$ and in particular $f\sim h$.
In the same way one argues that $f\sim h$ in the case where $H$ vanishes identically.

To finish the proof it remains to consider the case where neither $F$ nor $H$ vanish identically, and where $FH'-F'H=0$. 
Let $E\subseteq \C$ be a domain where $F$ has no zeros, then we have for all $z\in E$ that 
$$
(H/F)'(z) = \frac{(FH'-F'H)(z)}{F^2(z)} = 0.
$$
Hence $H=cF$ on $E$ for suitable constant $c\in\C$. By analyticity we get that $H=cF$ on $\C$.

Furthermore, since $|H|=|F|$ on $\bar{\Lambda}$ we have for all $z\in \bar{\Lambda}$ that 
$$
|F(z)|=|H(z)|=|cF(z)|,
$$
which implies that $(1-|c|)F$ vanishes on $\bar{\Lambda}$.
By assumption, $F\in\mathcal{A}(\mathcal{C})\setminus\{0\}$, and therefore cannot vanish on all of $\bar{\Lambda}$. From this we conclude that $|c|=1$.
Since the Bargmann transform is injective it follows from 
$$
B(h-cf) = H-cF =0
$$
that indeed $h\sim f$.
\end{proof}

The role of the set $\Lambda$ in Proposition \ref{proposition:abstract_uniqueness_result} is still somewhat concealed at this stage. However, what is noticeable is that $\mathcal{A}(\mathcal{C})$ consists of entire functions only. We have thus reduced the problem of finding a configuration which does phase retrieval to the problem of identifying sets of uniqueness for sub-classes of $\mathcal{O}(\C)$.
In fact, we can say even more about $\mathcal{A}(\mathcal{C})$. 
\begin{lemma}\label{lem:admfock}
For all $\mathcal{C}\subseteq L^2(\R)$ it holds that $\mathcal{A}(\mathcal{C})\subseteq \mathcal{F}_{4\pi}^2(\C)$.
\end{lemma}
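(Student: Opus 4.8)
The plan is to show that each element of $\mathcal{A}(\mathcal{C})$ lands in $\mathcal{F}_{4\pi}^2(\C)$ by tracking the Fock-space weights through the defining products in \eqref{def:AofC}. The key tool is the multiplicative behaviour of Fock spaces recorded in Lemma \ref{lem:fockproduct}: a product of a function in $\mathcal{F}_\alpha^2$ with one in $\mathcal{F}_\beta^2$ lands in $\mathcal{F}_{\alpha+\beta}^2$. The starting point is that the Bargmann transform $B$ maps $L^2(\R)$ isomorphically onto $\mathcal{F}_\pi^2(\C)$, so for every $f\in\mathcal{C}$ we have $Bf\in\mathcal{F}_\pi^2(\C)$. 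This immediately disposes of the first family $\{Bf:f\in\mathcal{C}\}$, since $\mathcal{F}_\pi^2(\C)\subseteq\mathcal{F}_{4\pi}^2(\C)$ (a larger weight only makes the weighted norm smaller, so membership is preserved for entire functions with the requisite growth; more cleanly, one checks the weight bookkeeping works out to a weight at most $4\pi$).

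The substantive part concerns the second family, the functions of the form $BfBh\bigl((Bf)'Bh-(Bh)'Bf\bigr)$. First I would establish that differentiation raises the Fock weight by a controlled amount: if $F\in\mathcal{F}_\alpha^2(\C)$ then $F'\in\mathcal{F}_{\alpha}^2(\C)$ as well, or at worst $F'$ lies in $\mathcal{F}_{\alpha'}^2(\C)$ for some $\alpha'$; here the natural and cleanest claim is that the derivative of a Fock function stays in the same Fock space, which follows from a Cauchy-estimate argument combined with the pointwise bound \eqref{eq:fockptwise}. Granting $F=Bf, H=Bh\in\mathcal{F}_\pi^2(\C)$ and $F',H'\in\mathcal{F}_\pi^2(\C)$, I would then add weights across the product. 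The factor $FH'$ lies in $\mathcal{F}_{2\pi}^2$, and likewise $F'H\in\mathcal{F}_{2\pi}^2$, so the Wronskian-type combination $FH'-F'H\in\mathcal{F}_{2\pi}^2(\C)$. Multiplying by the remaining factors $F\in\mathcal{F}_\pi^2$ and $H\in\mathcal{F}_\pi^2$ and applying Lemma \ref{lem:fockproduct} twice yields a function in $\mathcal{F}_{\pi+\pi+2\pi}^2(\C)=\mathcal{F}_{4\pi}^2(\C)$, exactly as desired.

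The step I expect to be the main obstacle is verifying that differentiation does not increase the Fock weight, i.e. that $F\in\mathcal{F}_\pi^2(\C)$ implies $F'\in\mathcal{F}_\pi^2(\C)$ (or into a space with a small enough weight that the total still comes out to $4\pi$). This is the only place where a genuine estimate rather than pure bookkeeping is needed, and the weight arithmetic must close up precisely — if differentiation cost any positive amount of weight, the final sum would exceed $4\pi$ and break the lemma. The cleanest route is a Cauchy integral estimate on a disk of fixed radius combined with the pointwise growth bound \eqref{eq:fockptwise}, optimizing the radius so that the Gaussian weight is preserved; alternatively one can argue directly on the power-series coefficients, since the monomials $z^n$ form an orthogonal basis of $\mathcal{F}_\pi^2$ with explicitly computable norms, and differentiation acts diagonally in a way that is square-summable-preserving against those weights. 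Once this derivative bound is in hand, the rest is a routine application of Lemma \ref{lem:fockproduct}.
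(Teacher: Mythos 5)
There is a genuine gap, and it sits exactly where you predicted: the claim that $F\in\mathcal{F}_\pi^2(\C)$ implies $F'\in\mathcal{F}_\pi^2(\C)$ is false, and no Cauchy-estimate argument will rescue it. Working in the orthogonal basis $\{z^n\}$ of $\mathcal{F}_\alpha^2(\C)$, one has $\|z^n\|_\alpha^2=n!/\alpha^n$, so writing $F=\sum_n c_n z^n$ and $a_n:=|c_n|^2 n!/\alpha^n$ gives $\|F\|_\alpha^2=\sum_n a_n$ while $\|F'\|_\alpha^2=\alpha\sum_n n\,a_n$. Choosing $a_n=n^{-3/2}$ produces $F\in\mathcal{F}_\alpha^2(\C)$ with $F'\notin\mathcal{F}_\alpha^2(\C)$. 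Differentiation maps $\mathcal{F}_\alpha^2(\C)$ into $\mathcal{F}_\beta^2(\C)$ for every $\beta>\alpha$, but it genuinely costs a positive amount of weight, and — as you yourself note — any positive loss makes the final weight exceed $4\pi$ and the density threshold exceed $4$. So your bookkeeping cannot close. (Cruder pointwise routes also fail: one only gets $|F'(z)|\lesssim(1+|z|)e^{\pi|z|^2/2}$, and the extra polynomial factor is not square-integrable against the correct Gaussian weight without further cancellation.)

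The paper's proof avoids estimating $F'$ and $H'$ separately and instead exploits an algebraic cancellation special to the Wronskian. Using the identity \eqref{eq:Vh1bargmann} one writes $(Bf)'(z)=V_{h_1}f(\bar z)\,\eta(z)+\pi\bar z\,Bf(z)$ and likewise for $h$; in the combination $(Bf)'Bh-Bf(Bh)'$ the two terms containing the (non-holomorphic, weight-increasing) factor $\pi\bar z$ cancel, leaving
$\eta(z)\left[V_{h_1}f(\bar z)Bh(z)-Bf(z)V_{h_1}h(\bar z)\right]$.
This expression is then estimated directly in $\mathcal{F}_{2\pi}^2(\C)$ by combining the $L^2$-isometry of the STFT, $\|V_{h_1}f\|_{L^2(\C)}=\|f\|_{L^2(\R)}$, with the pointwise bound \eqref{eq:fockptwise} applied to $Bh$. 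Only after that does Lemma \ref{lem:fockproduct} enter, twice, to multiply by $Bf,Bh\in\mathcal{F}_\pi^2(\C)$ and land in $\mathcal{F}_{4\pi}^2(\C)$. To repair your argument you would need to prove the Wronskian estimate $FH'-F'H\in\mathcal{F}_{2\pi}^2(\C)$ by some mechanism that sees the cancellation between the two products; treating $FH'$ and $F'H$ as independent terms, as your proposal does, cannot yield the sharp weight.
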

\begin{proof}
First we note that for all $f\in L^2(\R)$ it holds that $Bf\in \mathcal{F}_\pi^2(\C) \subseteq \mathcal{F}_{4\pi}^2(\C)$.
It remains to show that for all $f,h\in L^2(\R)$ it holds that 
\begin{equation}\label{eq:derivbargmannfock}
(Bf)'Bh- Bf (Bh)' \in \mathcal{F}_{2\pi}^2(\C).
\end{equation}
Indeed, from \eqref{eq:derivbargmannfock} it then follows by applying Lemma \ref{lem:fockproduct} twice that 
$$Bf Bh [(Bf)'Bh-Bf(Bh)']\in \mathcal{F}_{4\pi}^2(\C),$$ 
and we are done.
In order to prove \eqref{eq:derivbargmannfock} we use identity \eqref{eq:Vh1bargmann} to express the derivatives of the Bargmann transforms according to 
\begin{align*}
    (Bf)'(z) &= V_{h_1}f(\bar{z}) \eta(z) + \pi \bar{z}Bf(z),\\
    (Bh)'(z) &= V_{h_1}h(\bar{z}) \eta(z) + \pi \bar{z}Bh(z).
\end{align*}
Thus, we get that
\begin{equation*}
    \begin{split}
        [(Bf)'Bh- Bf (Bh)'](z)  = & \left[ V_{h_1}f(\bar{z}) \eta(z) + \pi \bar{z}Bf(z) \right] Bh(z) \\
        & - Bf(z) \left[ V_{h_1}h(\bar{z}) \eta(z) + \pi \bar{z}Bh(z) \right]
    \end{split}
\end{equation*}
Since the two terms containing $\bar{z}$ as a factor cancel out we obtain that 
\begin{equation}\label{eq:estadmterm}
    [(Bf)'Bh- Bf (Bh)'](z)
    = \eta(z)\left[ V_{h_1}f(\bar{z}) Bh(z) - Bf(z) V_{h_1}h(\bar{z})\right].
\end{equation}
For the first term on the right hand side we get that 
\begin{equation*}
    \begin{aligned}
    \|\eta(z) V_{h_1}f(x,-y)Bh(z)\|_{2\pi}^2 
        &= 2 \int_\C  e^{\pi|z|^2} |V_{h_1}f(\bar{z})|^2 |Bh(z)|^2 e^{-2\pi|z|^2} \,dA(z)\\
        &\le 2 \|V_{h_1}f\|_{L^2(\C)}^2 \|(Bh) e^{-\pi|\cdot|^2/2}\|_{L^\infty(\C)}^2\\
        &\le 2 \|f\|_{L^2(\R)}^2 \|Bh\|_{\pi}^2\\
        &= 2\|f\|_{L^2(\R)}^2 \|h\|_{L^2(\R)}^2\\
        &<\infty,
    \end{aligned}
\end{equation*}
where we used the pointwise estimate given in equation \eqref{eq:fockptwise}, as well as the unitary of $B$, and the equality $\| V_{h_1}f \|_{L^2(\R^2)} = \| f \|_{L^2(\R)}$. Since the second term on the right hand side of \eqref{eq:estadmterm} can be dealt with in the same manner, we arrive at the desired conclusion.
\end{proof}
Lemma \ref{lem:admfock} enables us to apply Theorem \ref{thm:seipsampling} and Theorem \ref{thm:perelomovuniq} and we get the following result as a direct consequence of Proposition \ref{proposition:abstract_uniqueness_result}.
\begin{theorem}\label{theorem:lattice_result}
Suppose that $\mathcal{P}\subseteq\C^2$ does phase retrieval.
Furthermore, let $\Lambda\subseteq \R^2$ be separated with $D^-(\Lambda)>4.$
Then it holds that $(\{g_p\}_{p\in\mathcal{P}}, \Lambda)$ does phase retrieval on $L^2(\R).$
Moreover, if $\Lambda$ is a shifted lattice it suffices that $D^- (\Lambda)\ge 4$ for the assertion to hold. 
\end{theorem}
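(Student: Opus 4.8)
The plan is to invoke the abstract uniqueness result of Proposition~\ref{proposition:abstract_uniqueness_result} with the signal class $\mathcal{C} = L^2(\R)$. Under this choice the hypothesis that $\mathcal{P}$ does phase retrieval is already granted, so everything reduces to verifying that $\bar{\Lambda}$ is a set of uniqueness for the function class $\mathcal{A}(L^2(\R))$ defined in \eqref{def:AofC}. Once this is established, the conclusion of the theorem follows at once.

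First I would collapse the uniqueness question for $\mathcal{A}(L^2(\R))$ to a uniqueness question in a single Fock space. By Lemma~\ref{lem:admfock} one has the inclusion $\mathcal{A}(L^2(\R)) \subseteq \mathcal{F}_{4\pi}^2(\C)$, and hence any set of uniqueness for $\mathcal{F}_{4\pi}^2(\C)$ is automatically a set of uniqueness for the smaller class $\mathcal{A}(L^2(\R))$. It therefore suffices to show that $\bar{\Lambda}$ is a set of uniqueness for $\mathcal{F}_{4\pi}^2(\C)$. Note that the critical Beurling density for this space is $\alpha/\pi = 4\pi/\pi = 4$, precisely the threshold appearing in the statement; this alignment between Lemma~\ref{lem:admfock} and the density $4$ is the crux of the matter.

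For the separated case I would observe that reflection about the real axis is an isometry of $\C$, so $\bar{\Lambda}$ is again separated and $D^-(\bar{\Lambda}) = D^-(\Lambda) > 4$. Theorem~\ref{thm:seipsampling} then shows that $\bar{\Lambda}$ is a set of sampling for $\mathcal{F}_{4\pi}^2(\C)$, and since every set of sampling is in particular a set of uniqueness, the claim follows. For the shifted-lattice case I would instead appeal to Perelomov's characterization: if $\Lambda = v + A\Z^2$, then $\bar{\Lambda}$ is again a shifted lattice whose generating matrix agrees with that of $\Lambda$ up to reflection, hence of the same covolume, so that $D^-(\bar{\Lambda}) = |\det A|^{-1} = D^-(\Lambda) \ge 4$. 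By the translation-invariance of the uniqueness property recorded after Theorem~\ref{thm:seipsampling}, $\bar{\Lambda}$ is a set of uniqueness for $\mathcal{F}_{4\pi}^2(\C)$ if and only if the underlying unshifted lattice is, and Theorem~\ref{thm:perelomovuniq} with $\alpha = 4\pi$ confirms this under the condition $|\det A|^{-1} \ge 4$.

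The main point to watch — a bookkeeping subtlety rather than a genuine obstacle — is the conjugation $\Lambda \mapsto \bar{\Lambda}$: one must confirm that separation, Beurling density, and lattice structure are all preserved under reflection about the real axis. The only conceptually interesting feature is the gap between the two density thresholds: for arbitrary separated sets Theorem~\ref{thm:seipsampling} forces the strict inequality $D^-(\bar{\Lambda}) > 4$, whereas for lattices the sharper uniqueness characterization of Perelomov permits the endpoint $D^-(\Lambda) = 4$, which is exactly why the lattice statement can be stated with the weaker hypothesis $D^-(\Lambda) \ge 4$.
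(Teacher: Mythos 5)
Your proposal is correct and follows exactly the route the paper intends: apply Proposition \ref{proposition:abstract_uniqueness_result} with $\mathcal{C}=L^2(\R)$, reduce to uniqueness for $\mathcal{F}_{4\pi}^2(\C)$ via Lemma \ref{lem:admfock}, and then invoke Theorem \ref{thm:seipsampling} for separated sets (strict inequality, sampling implies uniqueness) and Theorem \ref{thm:perelomovuniq} together with translation invariance for shifted lattices (endpoint allowed). Your explicit bookkeeping of how conjugation preserves separation, density, and lattice covolume fills in exactly the details the paper leaves implicit when it calls the theorem a ``direct consequence.''
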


\subsection{Phaseless sampling in $L^2(\R,\R)$}

Consider the inclusion of Fock spaces $\mathcal{F}_\alpha^2(\C) \subseteq \mathcal{F}_\beta^2(\C)$ for $0<\alpha < \beta$ together with the characterization of sets of uniqueness among lattices and sets of stable sampling provided in Theorem \ref{thm:perelomovuniq} and Theorem \ref{thm:seipsampling}, respectively. The inclusion and the characterization formally verify the intuition that if the sampling space $\mathcal{F}_\alpha^2(\C)$ grows so does the information on the samples $Z \subseteq \C$ which are needed to recover any function in $\mathcal{F}_\alpha^2(\C)$ uniquely via sampling on $Z$. The information provided by the samples is directly related to their lower Beurling density $D^-(Z)$. This serves as the initial impetus for investigating phaseless sampling problems for function classes $\mathcal{C}$ which are proper subspaces of $\lt$. Intuitively, we expect that a restriction of the STFT phase retrieval problem to a subspace $\mathcal{C} \subseteq \lt$ leads to phaseless sampling results with reduced density. We pay particular attention to the important class $\mathcal{C} = L^2(\R,\R)$ of real-valued functions, a natural assumption made in various applications. Furthermore, the class $\mathcal{C} = L^2(\R,\R)$ is of particular interest, since in a similar fashion as for $\mathcal{C} = L^2(\R)$, functions in $L^2(\R,\R)$ are in general not determined by phaseless samples located on a lattice when only a single window function is used \cite[Section 3.3]{grohsLiehrJFAA}. 

The objective of the present section is to show that the density which is sufficient to recover real-valued functions from phaseless samples can be reduced to one-half times the sufficient density in the complex regime. We start with a Lemma on the Bargmann transform of a real-valued map.

\begin{lemma}\label{lemma:bargmann_real}
Suppose that $f \in L^2(\R,\R)$ is real-valued. Then for every $z \in \C$ and every $n \in \N_0$ it holds that
$$
\overline{(Bf)^{(n)}(z)} = (Bf)^{(n)}(\overline{z}).
$$
\end{lemma}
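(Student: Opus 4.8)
The plan is to reduce everything to the base case $n=0$ and then exploit holomorphy. First I would treat $n=0$ by a direct computation with the integral defining the Bargmann transform. Conjugating
\[
\overline{Bf(z)} = 2^{1/4}\int_\R \overline{f(t)}\,\overline{e^{2\pi t z - \pi z^2 - \frac{\pi}{2}t^2}}\,dt,
\]
and using that $f(t)$, $t$ and $\pi$ are real, the conjugate of the exponent becomes $2\pi t \bar z - \pi \bar z^2 - \frac{\pi}{2} t^2$, so the right-hand side is exactly $Bf(\bar z)$. This settles $\overline{Bf(z)} = Bf(\bar z)$ for every $z \in \C$, which is the $n=0$ instance of the claim.

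For the derivatives I would pass to Taylor coefficients. Since $Bf \in \mathcal{O}(\C)$, write $Bf(z) = \sum_{k \ge 0} c_k z^k$. The identity $\overline{Bf(z)} = Bf(\bar z)$ then reads $\sum_k \overline{c_k}\,\bar z^k = \sum_k c_k \bar z^k$ for all $z$, which forces $c_k = \overline{c_k}$; that is, every Taylor coefficient of $Bf$ is real. Differentiating the power series term by term shows that $(Bf)^{(n)}$ again has real Taylor coefficients, and running the previous equivalence backwards for $(Bf)^{(n)}$ in place of $Bf$ yields $\overline{(Bf)^{(n)}(z)} = (Bf)^{(n)}(\bar z)$, as desired.

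Alternatively, and perhaps more self-containedly, I could differentiate under the integral sign directly. Writing $K_t(z) = e^{2\pi t z - \pi z^2 - \frac{\pi}{2}t^2}$, one has $\partial_z^n K_t(z) = P_n(t,z)\,K_t(z)$, where $P_n$ is a polynomial with \emph{real} coefficients, since it is generated by repeatedly applying $\partial_z$, which amounts to multiplication by $2\pi t - 2\pi z$. The Gaussian decay of $K_t$ justifies moving $\partial_z^n$ inside the integral, and then conjugation together with the reality of $f(t)$, of $t$, and of the coefficients of $P_n$ (so that $\overline{P_n(t,z)} = P_n(t,\bar z)$) gives $\overline{(Bf)^{(n)}(z)} = (Bf)^{(n)}(\bar z)$ directly for every $n$, bypassing the power-series bookkeeping.

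The only genuine technical point is the justification of term-by-term differentiation (or, in the second route, of differentiation under the integral); both are routine consequences of the locally uniform convergence of the Taylor expansion of an entire function, respectively of the rapid Gaussian decay of $K_t$, so I do not anticipate a real obstacle. The conceptual content is simply that the reality of $f$ makes $Bf$ a function with real Taylor coefficients, and the reflection identity $\overline{(Bf)^{(n)}(z)} = (Bf)^{(n)}(\bar z)$ is precisely the analytic expression of that fact, inherited by all derivatives.
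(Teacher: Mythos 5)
Your proof is correct, but it takes a somewhat different route from the paper's. The paper never computes with the integral beyond observing that $Bf$ is real-valued on the real axis when $f$ is real; it then notes that consequently every derivative $(Bf)^{(n)}$ is real on $\R$, so the trivial identity $(Bf)^{(n)}(z)=\overline{(Bf)^{(n)}(\overline{z})}$ holds for $z\in\R$, and since the right-hand side extends to an entire function (Schwarz reflection), the identity theorem upgrades this to all of $\C$ for every $n$ at once. You instead establish the $n=0$ identity globally by conjugating the defining integral, and then propagate to derivatives either via the reality of the Taylor coefficients or by differentiating under the integral sign. Both arguments are sound and elementary. The paper's version is shorter because the identity theorem does all the work of passing from $\R$ to $\C$ and from $n=0$ to general $n$ simultaneously; your version is more self-contained and explicit (it never invokes the identity theorem, only the power-series expansion or dominated convergence), at the cost of the extra bookkeeping with Taylor coefficients or with the polynomials $P_n$. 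A small simplification of your first route: once you have $\overline{Bf(z)}=Bf(\overline{z})$, i.e. $Bf(z)=\overline{Bf(\overline{z})}$ as an identity between two entire functions of $z$, you may simply differentiate it $n$ times to get $(Bf)^{(n)}(z)=\overline{(Bf)^{(n)}(\overline{z})}$ directly, bypassing the coefficient argument.
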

\begin{proof}
It follows from the definition of the Bargmann transform that if $f \in L^2(\R,\R)$ then $Bf$ is real-valued on the real axis. As a consequence, also $(Bf)^{(n)}$ is real-valued on the real axis for every $n \in \N_0$. Hence, for every $z \in \R$ and every $n \in \N_0$ we have
$
(Bf)^{(n)}(z) = \overline{(Bf)^{(n)}(\overline{z})}.
$
Observe that the right-hand side of the previous identity extends from $z \in \R$ to an entire function. The statement is therefore a consequence of the identity theorem.
\end{proof}

As an application of Lemma \ref{lemma:bargmann_real}, the next result demonstrates that a suitable decomposition of a uniqueness set for $\mathcal{A}(L^2(\R))$ leads to a reduction of uniqueness sets for $\mathcal{A}(L^2(\R,\R))$.

\begin{lemma}\label{lemma:reflection}
Let $\Lambda \subseteq \C$ and suppose that there exists a subset $\Gamma \subseteq \Lambda$  such that
$$
\Lambda = \Gamma \cup \overline{\Gamma}.
$$
If $\Lambda$ is a set of uniqueness for the class $\mathcal{A}(\lt)$ then $\Gamma$ is a set of uniqueness for the class $\mathcal{A}(L^2(\R,\R))$.
\end{lemma}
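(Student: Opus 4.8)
The plan is to reduce everything to a single structural observation: every function in $\mathcal{A}(L^2(\R,\R))$ has a zero set that is invariant under complex conjugation. Granting this, vanishing on $\Gamma$ will automatically force vanishing on the reflected set $\overline{\Gamma}$, hence on all of $\Lambda = \Gamma \cup \overline{\Gamma}$, and the hypothesis that $\Lambda$ is a set of uniqueness for $\mathcal{A}(\lt) \supseteq \mathcal{A}(L^2(\R,\R))$ will then immediately give the conclusion.

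First I would fix an arbitrary $G \in \mathcal{A}(L^2(\R,\R))$ and establish the reflection symmetry $\overline{G(z)} = G(\overline{z})$ for all $z \in \C$. The input is Lemma \ref{lemma:bargmann_real}, which tells us that for real-valued $f$ the Bargmann transform and all of its derivatives obey $\overline{(Bf)^{(n)}(z)} = (Bf)^{(n)}(\overline{z})$. For the single-window part $G = Bf$ the claimed symmetry is precisely the $n=0$ case. For the product part $G = Bf\,Bh\,\bigl((Bf)'Bh - (Bh)'Bf\bigr)$ with $f,h$ real, I would write $F = Bf$, $H = Bh$ and conjugate factor by factor, using the lemma on each of $F,H,F',H'$, to obtain
$$
\overline{G(z)} = F(\overline{z}) H(\overline{z})\bigl(F'(\overline{z}) H(\overline{z}) - H'(\overline{z}) F(\overline{z})\bigr) = G(\overline{z}).
$$
So in both cases $G$ satisfies the same reflection identity.

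With this in hand, suppose $G$ vanishes on $\Gamma$. For any $\gamma \in \Gamma$ the symmetry yields $G(\overline{\gamma}) = \overline{G(\gamma)} = 0$; and since $\{\overline{\gamma} : \gamma \in \Gamma\}$ is exactly $\overline{\Gamma}$ (by the definition of the reflected set, $s \in \overline{\Gamma} \iff \overline{s} \in \Gamma$), the function $G$ vanishes on $\overline{\Gamma}$ and therefore on $\Lambda = \Gamma \cup \overline{\Gamma}$. Because $L^2(\R,\R) \subseteq \lt$ we have $G \in \mathcal{A}(\lt)$, and the assumption that $\Lambda$ is a set of uniqueness for $\mathcal{A}(\lt)$ forces $G \equiv 0$. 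As $G$ was arbitrary, $\Gamma$ is a set of uniqueness for $\mathcal{A}(L^2(\R,\R))$, which is the assertion.

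The argument is essentially bookkeeping once the reflection symmetry is isolated, so I do not expect a genuine obstacle. The one place deserving care is the factor-by-factor conjugation of the product term: there one must invoke Lemma \ref{lemma:bargmann_real} not only for the Bargmann transforms but also for their first derivatives, and then check that the antisymmetric Wronskian-type combination $(Bf)'Bh - (Bh)'Bf$ transforms as cleanly as the individual factors before multiplying back in $Bf\,Bh$.
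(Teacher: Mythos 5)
Your proposal is correct and follows essentially the same route as the paper's proof: both derive the reflection symmetry $\overline{G(z)} = G(\overline{z})$ for elements of $\mathcal{A}(L^2(\R,\R))$ from Lemma \ref{lemma:bargmann_real}, conclude that vanishing on $\Gamma$ forces vanishing on $\Lambda = \Gamma \cup \overline{\Gamma}$, and then invoke the inclusion $\mathcal{A}(L^2(\R,\R)) \subseteq \mathcal{A}(\lt)$ together with the uniqueness hypothesis on $\Lambda$. Your explicit factor-by-factor conjugation of the Wronskian-type term is exactly the detail the paper leaves implicit.
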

\begin{proof}
Let $f,h \in L^2(\R,\R)$ be two real-valued functions. Further, let $F = Bf$ and $H=Bh$ be the Bargmann transform of $f$ and $h$, respectively. In addition, define $G = FH(FH'-F'H)$. We have to show that if $F|_\Gamma = 0$ then $F \equiv 0$ and if $G|_\Gamma =0$ then $G \equiv 0$. We know from Lemma \ref{lemma:bargmann_real} that $\overline{F(z)} = F(\overline{z})$ and $\overline{G(z)} = G(\overline{z})$ for every $z \in \C$. Consequently, if $F$ vanishes at $\gamma \in \Gamma$ then $F$ vanishes at $\overline{\gamma} \in \overline{\Gamma}$ and the same property holds for the function $G$. This shows that both $F$ and $G$ vanish at $\Gamma \cup \overline{\Gamma} = \Lambda$. But $\Lambda$ is a uniqueness set for the class $\mathcal{A}(\lt) \supseteq \mathcal{A}(L^2(\R,\R))$. Therefore, $F=0$ and $G=0$ which yields the assertion.
\end{proof}

In a similar fashion as in Section \ref{subsection:phaseless_sampling}, we obtain the following abstract uniqueness result for the function class $\mathcal{C}=L^2(\R,\R)$ which follows directly from Lemma \ref{lemma:reflection} and Proposition \ref{proposition:abstract_uniqueness_result}.

\begin{proposition}\label{proposition:abstract_uniqueness_result_real_case}
Suppose that $\mathcal{P}\subseteq\C^2$ does phase retrieval.
Moreover, suppose that $\Lambda \subseteq \C$ has the property that $\overline{\Lambda}$ is a set of uniqueness for the class $\mathcal{A}(\lt)$. Further, assume that $\Gamma \subseteq \Lambda$ satisfies $\Lambda = \Gamma \cup \overline{\Gamma}$. Then it holds that $(\{g_p\}_{p\in\mathcal{P}}, \Gamma)$ does phase retrieval on $L^2(\R,\R)$.
\end{proposition}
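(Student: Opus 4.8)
The plan is to apply Proposition \ref{proposition:abstract_uniqueness_result} with the signal class $\mathcal{C} = L^2(\R,\R)$ and the sampling set $\Gamma$; the only hypothesis of that proposition which is not immediately at hand is that $\overline{\Gamma}$ be a set of uniqueness for $\mathcal{A}(L^2(\R,\R))$, and producing this is precisely where Lemma \ref{lemma:reflection} enters.

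First I would record the elementary but crucial observation that the set $\Lambda = \Gamma \cup \overline{\Gamma}$ is conjugate-symmetric: taking complex conjugates gives $\overline{\Lambda} = \overline{\Gamma} \cup \overline{\overline{\Gamma}} = \overline{\Gamma} \cup \Gamma = \Lambda$. Hence the hypothesis that $\overline{\Lambda}$ is a set of uniqueness for $\mathcal{A}(\lt)$ is the same as saying that $\Lambda$ itself is a set of uniqueness for $\mathcal{A}(\lt)$.

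Next, to feed Proposition \ref{proposition:abstract_uniqueness_result} I need the conjugate of the sampling set, namely $\overline{\Gamma}$, to be a uniqueness set for $\mathcal{A}(L^2(\R,\R))$. I would obtain this by applying Lemma \ref{lemma:reflection} with the subset $\overline{\Gamma} \subseteq \Lambda$ playing the role of $\Gamma$: this is legitimate precisely because $\Lambda = \overline{\Gamma} \cup \overline{(\overline{\Gamma})}$ (again using $\overline{\overline{\Gamma}} = \Gamma$), and because $\Lambda$ is a uniqueness set for $\mathcal{A}(\lt)$ by the previous paragraph. The lemma then yields that $\overline{\Gamma}$ is a set of uniqueness for $\mathcal{A}(L^2(\R,\R))$.

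With this in hand, Proposition \ref{proposition:abstract_uniqueness_result} applied to the signal class $L^2(\R,\R)$ and the sampling set $\Gamma$ (whose conjugate $\overline{\Gamma}$ is now a uniqueness set for $\mathcal{A}(L^2(\R,\R))$) immediately gives that $(\{g_p\}_{p\in\mathcal{P}}, \Gamma)$ does phase retrieval on $L^2(\R,\R)$, which completes the argument. The only point requiring care is the conjugation bookkeeping: Proposition \ref{proposition:abstract_uniqueness_result} demands that the \emph{conjugated} sampling set be a uniqueness set, so Lemma \ref{lemma:reflection} must be invoked for $\overline{\Gamma}$ rather than for $\Gamma$. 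Because the configuration is conjugate-symmetric, applying the lemma to $\Gamma$ and to $\overline{\Gamma}$ both deliver uniqueness sets, but only the latter is the one that slots directly into the proposition.
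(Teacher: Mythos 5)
Your proposal is correct and follows exactly the route the paper intends (the paper gives no written proof, stating only that the result follows directly from Lemma \ref{lemma:reflection} and Proposition \ref{proposition:abstract_uniqueness_result}); you have filled in the details accurately. In particular, your observation that the conjugate symmetry $\overline{\Lambda}=\Lambda$ lets you invoke Lemma \ref{lemma:reflection} for $\overline{\Gamma}$ rather than $\Gamma$, so that the hypothesis of Proposition \ref{proposition:abstract_uniqueness_result} is met for the sampling set $\Gamma$, is precisely the bookkeeping the paper leaves implicit.
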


For instance, in the situation of Proposition \ref{proposition:abstract_uniqueness_result_real_case}, we observe that if $\Lambda$ is a set of uniqueness for the class $\mathcal{A}(\lt)$ which is symmetric with respect to the real axis, i.e. $\Lambda = \overline{\Lambda}$, then $(\{g_p\}_{p\in\mathcal{P}}, \Lambda \cap \mathrm{cl}(\mathbb{H}^+))$ does phase retrieval on $\mathcal{A}(L^2(\R,\R))$ where $\mathbb{H}^+ \coloneqq \{ z \in \C : \im(z) > 0 \}$ denotes the open upper half-plane, and $\mathrm{cl}(\mathbb{H}^+)$ denotes the closure of $\mathbb{H}^+$. In fact, phase retrieval on $\mathcal{A}(L^2(\R,\R))$ is possible from shifted lattices $\Lambda \subseteq \R^2$ with density $D^-(\Lambda) \geq 2$. Thus, in comparison to the complex case, a density reduction by one-half is achievable.

\begin{theorem}\label{theorem:separable_lattice_uniqueness_real_valued}
Suppose that $\mathcal{P}\subseteq\C^2$ does phase retrieval.
Furthermore, let $\Lambda =\alpha \Z \times \beta \Z$ with $\alpha,\beta \in \R \setminus \{ 0 \}$ be a separable lattice of density $D^-(\Lambda) \geq 2$.
Then it holds that $(\{g_p\}_{p\in\mathcal{P}}, (0,\tfrac{\beta}{4})^T+\Lambda)$ does phase retrieval on $L^2(\R,\R)$.
\end{theorem}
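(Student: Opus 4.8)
The plan is to apply Proposition \ref{proposition:abstract_uniqueness_result_real_case} with the shifted lattice $\Gamma := (0,\tfrac{\beta}{4})^T + \Lambda = \alpha\Z \times (\tfrac{\beta}{4} + \beta\Z)$ playing the role of the sampling set. Since $\mathcal{P}$ does phase retrieval by hypothesis, the whole statement reduces to exhibiting a set $\Omega$ with $\Gamma \subseteq \Omega$, with $\Omega = \Gamma \cup \overline{\Gamma}$, and with $\overline{\Omega}$ a set of uniqueness for $\mathcal{A}(\lt)$; the proposition then delivers exactly the desired conclusion.

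First I would compute $\Gamma \cup \overline{\Gamma}$. Recall that for $S \subseteq \R^2$ the set $\overline{S}$ is the reflection across the real axis, so $\overline{\Gamma} = \alpha\Z \times (-\tfrac{\beta}{4} + \beta\Z)$. The role of the quarter-period shift $\tfrac{\beta}{4}$ becomes visible here: the second-coordinate sets $\tfrac{\beta}{4}+\beta\Z$ and $-\tfrac{\beta}{4}+\beta\Z$ interlock to form $\tfrac{\beta}{4}+\tfrac{\beta}{2}\Z$, so that $\Omega := \Gamma \cup \overline{\Gamma} = \alpha\Z \times (\tfrac{\beta}{4} + \tfrac{\beta}{2}\Z)$ is again a shifted lattice, now with generating matrix $\mathrm{diag}(\alpha,\tfrac{\beta}{2})$ and density $D^-(\Omega) = 2/|\alpha\beta| = 2\,D^-(\Lambda) \geq 4$. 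Moreover the set $\tfrac{\beta}{4}+\tfrac{\beta}{2}\Z$ is invariant under negation, whence $\Omega$ is symmetric with respect to the real axis, i.e. $\overline{\Omega} = \Omega$.

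It then remains to check that $\overline{\Omega} = \Omega$ is a set of uniqueness for $\mathcal{A}(\lt)$. By Lemma \ref{lem:admfock} we have $\mathcal{A}(\lt) \subseteq \mathcal{F}_{4\pi}^2(\C)$, so it suffices that $\Omega$ be a uniqueness set for $\mathcal{F}_{4\pi}^2(\C)$. Writing $\Omega$ as the translate $(0,\tfrac{\beta}{4})^T + (\alpha\Z \times \tfrac{\beta}{2}\Z)$ and invoking the translation invariance of uniqueness sets for Fock spaces (established after Theorem \ref{thm:seipsampling}), the question reduces to the lattice $\alpha\Z \times \tfrac{\beta}{2}\Z$, for which Perelomov's characterization (Theorem \ref{thm:perelomovuniq}) with parameter $4\pi$ requires precisely density $\geq 4$ — which is exactly what we just verified. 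Feeding $\Omega$ and $\Gamma$ into Proposition \ref{proposition:abstract_uniqueness_result_real_case} concludes the argument.

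Once the auxiliary set $\Omega$ is correctly identified, the remainder is bookkeeping, so I expect the only genuinely delicate point to be recognizing that the shift by $\tfrac{\beta}{4}$ is precisely what doubles the density. With no shift, $\beta\Z$ is already symmetric, so $\overline{\Gamma}=\Gamma$ and the union gains nothing, while a generic shift would destroy either the lattice structure of $\Gamma \cup \overline{\Gamma}$ or its symmetry across the real axis. The quarter-period offset is the unique choice making $\Gamma$ and $\overline{\Gamma}$ interlock into a shifted lattice of twice the density that remains mirror-symmetric, which is exactly what allows Perelomov's theorem — demanding density $4$ for $\mathcal{F}_{4\pi}^2(\C)$ — to furnish a uniqueness set from a sampling set of density only $2$.
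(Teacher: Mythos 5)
Your proof is correct and follows essentially the same route as the paper: you verify that $\Gamma\cup\overline{\Gamma}$ is a mirror-symmetric shifted lattice of density $2/|\alpha\beta|\geq 4$, hence a uniqueness set for $\mathcal{A}(L^2(\R))\subseteq\mathcal{F}_{4\pi}^2(\C)$ by Perelomov's theorem and translation invariance, and then invoke Proposition \ref{proposition:abstract_uniqueness_result_real_case}. The only cosmetic difference is that the paper obtains the identity $\Gamma\cup\overline{\Gamma}=(0,\tfrac{\beta}{4})^T+(\alpha\Z\times\tfrac{\beta}{2}\Z)$ as a special case of Lemma \ref{lemma:decomposition_of_lattices}, whereas you compute the interlocking of $\tfrac{\beta}{4}+\beta\Z$ and $-\tfrac{\beta}{4}+\beta\Z$ directly.
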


The proof of the above statement is based on the following Lemma.

\begin{lemma}\label{lemma:decomposition_of_lattices}
Suppose that $\Lambda \subseteq \R^2$ is a shifted lattice of the form $\Lambda=(0,\frac{\beta}{2})^T + (\alpha \Z + \beta \Z)$ with $\alpha,\beta \in \R \setminus \{ 0 \}$. Further, define shifted lattices $\Gamma_1, \Gamma_2 \subseteq \R^2$ by 
$$
\Gamma_1 = \begin{pmatrix} 0 \\ \tfrac{\beta}{2} \end{pmatrix} + \begin{pmatrix} \alpha & 0 \\ 0 & 2\beta \end{pmatrix} \Z^2, \ \ \ \Gamma_2 = \begin{pmatrix} 0 \\ \tfrac{\beta}{2} \end{pmatrix} + \begin{pmatrix} \alpha & 0 \\ \beta & 2\beta \end{pmatrix} \Z^2
$$
Then the following holds:
\begin{enumerate}[i)]
    \item $\Gamma_1$ and $\Gamma_2$ are shifted sub-lattices of $\Lambda$
    \item It holds that $D^-(\Lambda) = (\alpha \beta)^{-1}$ and that $D^{-}(\Gamma_1) = D^{-}(\Gamma_2) = (2 \alpha \beta)^{-1}$
    \item $\Lambda = \Gamma_1 \cup \overline{\Gamma_1}= \Gamma_2 \cup \overline{\Gamma_2}$.
\end{enumerate}
\end{lemma}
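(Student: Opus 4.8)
The plan is to exploit the fact that all five sets in play ($\Lambda$, $\Gamma_1$, $\Gamma_2$ and the two reflections $\overline{\Gamma_1},\overline{\Gamma_2}$) are shifted lattices with lower-triangular generating matrices, so that every assertion reduces to elementary bookkeeping with integer coordinates. I write points of $\R^2$ as columns $(\alpha m, \beta s)^T$. Since $\alpha\neq 0$, the first coordinate of any point of $\Lambda$, $\Gamma_1$, $\Gamma_2$ (or of their reflections, which fix the first coordinate) pins down a unique $m\in\Z$; hence it suffices to work column by column, recording for each fixed $m$ the admissible values of $s=y/\beta$. A short computation of the generators gives, for column $m$: $\Lambda$ yields $s\in\Z+\tfrac12$; $\Gamma_1$ yields $s\in 2\Z+\tfrac12$; and $\Gamma_2$ yields $s\in m+2\Z+\tfrac12$.

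For part (i) I would invoke the standard containment criterion: a shifted lattice $v+B\Z^2$ is contained in $w+A\Z^2$ exactly when $v-w\in A\Z^2$ and $A^{-1}B\in\Z^{2\times 2}$. Here all three shift vectors equal $(0,\tfrac{\beta}{2})^T$, so $v-w=0$ is automatic; with $A=\mathrm{diag}(\alpha,\beta)$ one checks $A^{-1}\mathrm{diag}(\alpha,2\beta)=\mathrm{diag}(1,2)$ and $A^{-1}\bigl(\begin{smallmatrix}\alpha & 0\\ \beta & 2\beta\end{smallmatrix}\bigr)=\bigl(\begin{smallmatrix}1 & 0\\ 1 & 2\end{smallmatrix}\bigr)$, both integral. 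This simultaneously exhibits $\Gamma_1,\Gamma_2$ as shifted lattices (their generating matrices have determinant $2\alpha\beta\neq 0$) and shows $\Gamma_1,\Gamma_2\subseteq\Lambda$. Part (ii) then follows immediately from the density formula $D^-(A\Z^2)=|\det A|^{-1}$ recorded after Theorem \ref{thm:seipsampling}, together with the translation invariance of $D^-$ (clear from the $\inf_{w}$ in its definition): the determinants are $\alpha\beta$, $2\alpha\beta$, $2\alpha\beta$, giving $D^-(\Lambda)=|\alpha\beta|^{-1}$ and $D^-(\Gamma_1)=D^-(\Gamma_2)=|2\alpha\beta|^{-1}$.

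The real content is part (iii), which I would prove by tracking residues of $2s$ modulo $4$ in each fixed column. For $\Lambda$ the quantity $2s$ runs over all odd integers; for $\Gamma_1$ one has $2s\in 4\Z+1$, and since $\overline{\Gamma_1}$ negates $s$ it contributes $2s\in-(4\Z+1)=4\Z+3$, so $\Gamma_1$ and $\overline{\Gamma_1}$ cover the two odd residue classes mod $4$ in every column and their union is exactly $\Lambda$. The hard part is $\Gamma_2$, the only place where the off-diagonal entry $\beta$ couples the column index $m$ to the second coordinate: there $2s\in 4\Z+(2m+1)$, so the residue class of $2s$ mod $4$ equals $1$ or $3$ according to the parity of $m$, while $\overline{\Gamma_2}$ supplies $2s\in 4\Z-(2m+1)$, i.e. precisely the complementary class ($-(2m+1)\equiv 3$ for even $m$ and $\equiv 1$ for odd $m$). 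Thus in each column $\Gamma_2$ and $\overline{\Gamma_2}$ again split the odd integers into the two residue classes mod $4$, merely with the roles interchanged between even and odd $m$, so their union is once more all of $\Lambda$ (using $\overline{\Gamma_2}\subseteq\overline{\Lambda}=\Lambda$ for the reverse inclusion). The obstacle is exactly this parity-dependent bookkeeping for $\Gamma_2$; once one sees that the reflection contributes precisely the residue class absent from $\Gamma_2$ in each column, the identity $\Lambda=\Gamma_2\cup\overline{\Gamma_2}$ follows and the proof is complete.
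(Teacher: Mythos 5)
Your proof is correct and follows essentially the same route as the paper's: part (i) via the integrality criterion for shifted sub-lattices, part (ii) via the determinant formula for the Beurling density, and part (iii) by direct two-sided inclusion using parity of the integer coordinates (your column-by-column residue-mod-$4$ bookkeeping is a repackaging of the paper's case split on the parity of $k$, resp.\ $k\pm n$). The only cosmetic difference is that you obtain the reverse inclusion from $\overline{\Gamma_i}\subseteq\overline{\Lambda}=\Lambda$, whereas the paper verifies $\overline{\Gamma_i}\subseteq\Lambda$ by an explicit computation.
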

\begin{proof}
The fact that $\Gamma_1$ and $\Gamma_2$ are shifted sub-lattices of $\Lambda$ follows directly from the definition of $\Gamma_1$ and $\Gamma_2$.

The second statement follows directly from the fact that for every shifted lattice $v+A\Z^2$, $A \in \mathrm{GL}_2(\R)$, $v\in \R^2,$ it holds that $D^{-}(v+A\Z^2) = |\det(A)|^{-1}$.

It remains to prove the third claim. We start by showing the inclusion $\Lambda \subseteq  \Gamma_1 \cup \overline{\Gamma_1}$. 
To that end, pick an arbitrary $z \in \Lambda$. Then there exists $(n,k)^T \in \Z^2$ such that
\begin{equation}\label{eq:point_z}
    z = \begin{pmatrix} \alpha n \\ \beta(k+\frac{1}{2}) \end{pmatrix}.
\end{equation}
If $k$ is even then $k=2m$ for some $m \in \Z$ and this gives
$$
z = \begin{pmatrix} 0 \\ \tfrac{\beta}{2} \end{pmatrix} + \begin{pmatrix} \alpha & 0 \\ 0 & 2\beta \end{pmatrix} \begin{pmatrix} n \\ m \end{pmatrix} \in \Gamma_1.
$$
On the other hand, if $k$ is odd then $k=2m-1$ for some $m \in \Z$ which yields
\begin{equation*}
    \begin{split}
        z & = \begin{pmatrix} \alpha n \\ \tfrac{\beta}{2} + \beta(2m-1) \end{pmatrix} = \begin{pmatrix} \alpha n \\ -\tfrac{\beta}{2}  -2\beta(-m) \end{pmatrix} \\
        & = \overline{\begin{pmatrix} 0 \\ \tfrac{\beta}{2} \end{pmatrix} + \begin{pmatrix} \alpha & 0 \\ 0 & 2\beta \end{pmatrix}  \begin{pmatrix} n \\ -m \end{pmatrix}} \in \overline{\Gamma_1}.
    \end{split}
\end{equation*}
As a result, the inclusion $\Lambda \subseteq \Gamma_1 \cup \overline{\Gamma_1}$ holds true and we are left with the proof that $\overline{\Gamma_1} \subseteq \Lambda$. This, however, is a consequence of the fact that for every $(n,k)^T \in \Z^2$ we have
\begin{equation*}
    \begin{split}
        & \overline{\begin{pmatrix} 0 \\ \tfrac{\beta}{2} \end{pmatrix} + \begin{pmatrix} \alpha & 0 \\ 0 & 2\beta \end{pmatrix}  \begin{pmatrix} n \\ k\end{pmatrix}} \\ &= \begin{pmatrix} \alpha n \\ -\tfrac{\beta}{2} - 2\beta k \end{pmatrix} 
        = \begin{pmatrix} 0 \\ \tfrac{\beta}{2} \end{pmatrix} + \begin{pmatrix} \alpha & 0 \\ 0 & \beta \end{pmatrix}  \begin{pmatrix} n \\ 1-2(k+1)\end{pmatrix} \in \Lambda.
    \end{split}
\end{equation*}
In the remainder of the proof we establish the equality $\Lambda = \Gamma_2 \cup \overline{\Gamma_2}$. In a similar fashion as above we pick a point $z \in \Lambda$ and observe that $z$ is given as in equation \eqref{eq:point_z} for some $(n,k)^T \in \Z^2$. 
If either both $n,k$ are even or both $n,k$ are odd then $k-n$ is an even integer and there exists an $m \in \Z$ such that $k-n=2m$, or equivalently,
$$
k+\tfrac{1}{2} = n + 2m + \tfrac{1}{2}.
$$
This shows that the point $z$ as defined in \eqref{eq:point_z} satisfies the relation
$$
z =  \begin{pmatrix} \alpha n \\ \beta(n + 2m + \tfrac{1}{2})\end{pmatrix} =  \begin{pmatrix} 0 \\ \tfrac{\beta}{2} \end{pmatrix} + \begin{pmatrix} \alpha & 0 \\ \beta & 2 \beta \end{pmatrix} \begin{pmatrix} n \\ m \end{pmatrix} \in \Gamma_2.
$$
On the other hand, if $n,k$ are such that one of them is even and one of them is odd then $k+n$ is an odd number. Thus, there exists an $m \in \Z$ such that $k+n+1 = -2m$, or equivalently,
$$
k + \tfrac{1}{2} = - (n+2m+\tfrac{1}{2}),
$$
which yields
$$
z = \begin{pmatrix} \alpha n \\ -\beta(n + 2m + \tfrac{1}{2})\end{pmatrix} =  \overline{\begin{pmatrix} 0 \\ \tfrac{\beta}{2} \end{pmatrix} + \begin{pmatrix} \alpha & 0 \\ \beta & 2 \beta \end{pmatrix} \begin{pmatrix} n \\ m \end{pmatrix}} \in \overline{\Gamma_2}.
$$
Hence, we have proved that $\Lambda \subseteq \Gamma_2 \cup \overline{\Gamma_2}$. To prove the desired equality, it remains to show that $\overline{\Gamma_2} \subseteq \Lambda$. But this follows readily by picking an arbitrary $(n,k)^T \in \Z^2$ and observing that
\begin{equation*}
    \begin{split}
        \overline{\begin{pmatrix} 0 \\ \tfrac{\beta}{2} \end{pmatrix} + \begin{pmatrix} \alpha & 0 \\ \beta & 2\beta \end{pmatrix} \begin{pmatrix} n \\ k \end{pmatrix}} & =\begin{pmatrix} \alpha n \\ -\tfrac{\beta}{2} - \beta n - 2\beta k \end{pmatrix} \\
        &= \begin{pmatrix} 0 \\ \tfrac{\beta}{2} \end{pmatrix} + \begin{pmatrix} \alpha & 0 \\ 0 & \beta \end{pmatrix}  \begin{pmatrix} n \\ -n-2k-1 \end{pmatrix} \in \Lambda.
    \end{split}
\end{equation*}
\end{proof}

\begin{proof}[Proof of Theorem \ref{theorem:separable_lattice_uniqueness_real_valued}]
Notice that the shifted lattice $\Gamma \coloneqq (0,\tfrac{\beta}{4})^T + \Lambda$ as given in the statement of the theorem is equal to the lattice $\Gamma_1$ as given in Lemma \ref{lemma:decomposition_of_lattices} with $\beta$ replaced by $\frac{\beta}{2}$. Additionally, Lemma \ref{lemma:decomposition_of_lattices}(iii) shows that
$$
\Gamma \cup \overline{\Gamma} = (0,\tfrac{\beta}{4})^T + (\alpha \Z \times \tfrac{\beta}{2}\Z).
$$
Since by assumption we have $D^-(\Lambda) = (\alpha \beta)^{-1} \geq 2$ it follows that the shifted lattice $\Gamma \cup \overline{\Gamma}$ satisfies $D^-(\Gamma \cup \overline{\Gamma}) = 2(\alpha \beta)^{-1} \geq 4$. In particular, $\Gamma \cup \overline{\Gamma}$ is a set of uniqueness for $\mathcal{A}(\lt)$. Proposition \ref{proposition:abstract_uniqueness_result_real_case} yields the assertion.
\end{proof}

Notice that Lemma \ref{lemma:decomposition_of_lattices} yields two possibilities of shifted lattices which guarantee phase retrieval in $L^2(\R,\R)$, namely $\Gamma_1$ and $\Gamma_2$. The shifted lattice $\Gamma_1$ is separable and $\Gamma_2$ arises from a lattice of the form $a\Z^2, a>0$, via a rotation followed by a translation. Figure \ref{figure:lattice} depicts the lattice $\Gamma_2$.

\begin{figure}[h]
\centering
\hspace*{-0.3cm}
  \includegraphics[width=13cm]{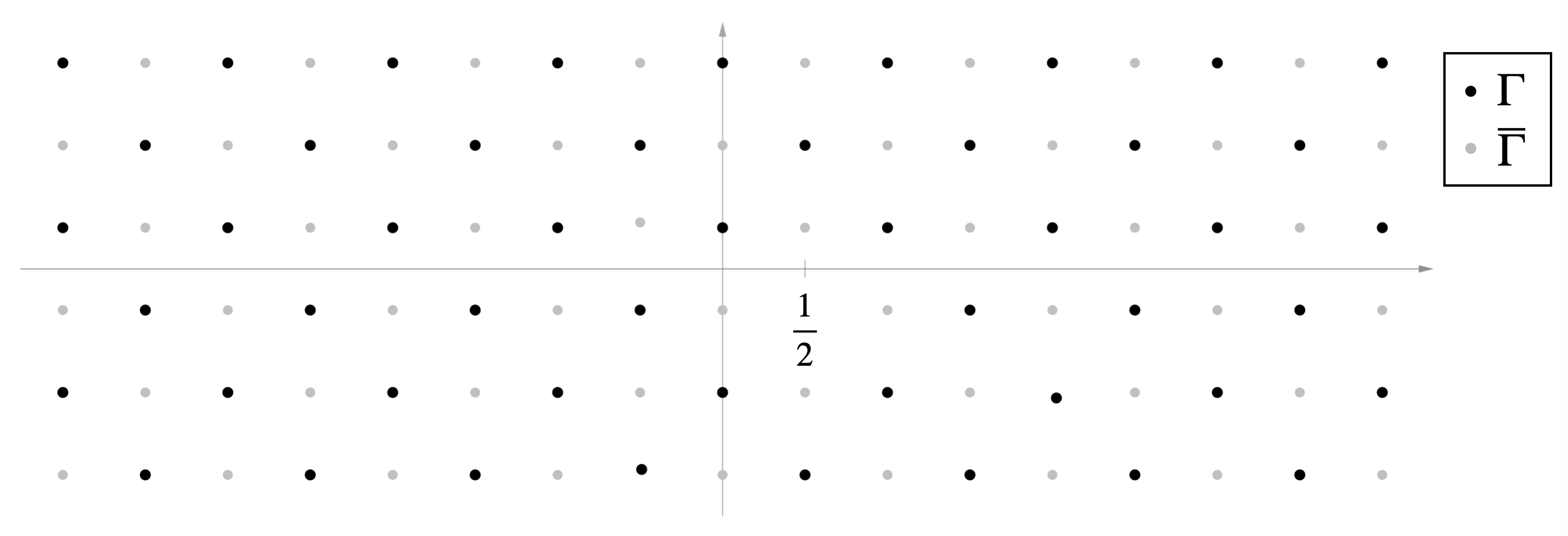}
\caption{A shifted lattice $\Gamma$ and its complex conjugate $\overline{\Gamma}$. The union $\Gamma \cup \overline{\Gamma}$ is the shifted rectangular lattice $\Lambda = (0,1/4)+\frac{1}{2}\Z^2$ with $D^-(\Lambda)=4$. In the situation of Theorem \ref{theorem:lattice_result}, $(\{g_p\}, \Lambda)$ does phase retrieval on $L^2(\R)$ whereas $(\{g_p\}, \Gamma)$ does phase retrieval on $L^2(\R,\R)$.}
\label{figure:lattice}
\end{figure}

\section*{Acknowledgements}
Martin Rathmair was supported by the Erwin–Schr{\"o}dinger Program (J-4523) of the Austrian Science Fund (FWF).

\bibliographystyle{abbrv}
\bibliography{bibfile}

\end{document}